\newtheorem{theorem}{Theorem}[section]
\newtheorem{lemma}[theorem]{Lemma}
\newtheorem{proposition}[theorem]{Proposition}
\newtheorem{corollary}[theorem]{Corollary}
\newtheorem*{lemma*}{Lemma}
\theoremstyle{definition}
\newtheorem{definition}[theorem]{Definition}
\newtheorem{assumption}[theorem]{Assumption}
\newtheorem{remark}[theorem]{Remark}
\numberwithin{equation}{section}
\newcommand{\A}{\mathcal{A}}
\newcommand{\B}{\mathcal{B}}
\newcommand{\C}{\mathcal{C}}
\newcommand{\D}{\mathbb{D}}
\newcommand{\DD}{\mathcal{D}}
\newcommand{\Di}[1]{|#1|}
\newcommand\E{\hskip.15ex\mathsf{E}\hskip.10ex}
\newcommand{\F}{\mathcal{F}}
\newcommand{\FF}{\mathbb{F}}
\DeclareMathOperator{\I}{\mathbbm{1}}
\newcommand{\N}{\mathbb{N}}
\renewcommand\P{\mathsf{P}}
\newcommand{\R}{\mathbb{R}}
\newcommand{\Z}{\mathbb{Z}}
\newcommand{\1}{\mathbbm{1}}
\newcommand{\eps}{\varepsilon}
\newcommand{\wt}{\widetilde}
\newcommand{\diff}{\,\mathrm{d}}
\newcommand{\var}{\mathrm{var}}
\DeclareMathOperator{\Var}{Var}
\DeclareMathOperator{\sign}{\mathrm{sign}}
\DeclareMathOperator{\Law}{\mathrm{Law}}
\newcommand{\nn}{\nonumber}
\newcommand*{\Gref}[1]{\hyperref[SDEfunc]{Eq($#1$)}}
\newcommand*{\Gtag}[1]{\tag{Eq($#1$)}}
\title{Weak existence for SDEs with singular drifts and fractional Brownian or L\'evy noise beyond the subcritical regime}
\author{Oleg Butkovsky, Samuel Gallay}
\author{
	Oleg Butkovsky%
	\thanks{Weierstrass Institute, Mohrenstrasse 39, 10117 Berlin, FRG. Email: \texttt{oleg.butkovskiy@gmail.com}}
	\and
	\setcounter{footnote}{3}
	Samuel Gallay%
\thanks{Université Côte-d'Azur, LJAD, Nice, France. Email: \texttt{samuel.gallay@univ-cotedazur.fr}}
	}
\date{February 10, 2026}
\begin{document}

\maketitle
\abstract{We study a multidimensional stochastic differential equation with additive noise:
\[
d X_t=b(t, X_t) dt +d \xi_t,
\]
where the drift $b$ is integrable in space and time, and $\xi$ is either a fractional Brownian motion or a Lévy process. We show weak existence of solutions to this equation under the optimal condition on integrability indices of $b$, going beyond the subcritical Krylov--Röckner (Prodi--Serrin--Ladyzhenskaya) regime. This extends the recent results of Krylov (2020) to the fractional Brownian and Lévy cases. We also construct a counterexample to demonstrate the optimality of this condition. In the one-dimensional case, we show the existence of a strong solution under the same condition. Our methods are built upon 
a version of the stochastic sewing lemma of L\^e and the John--Nirenberg inequality.	}



 \section{Introduction}

We study the existence of weak solutions to stochastic differential equations (SDEs) driven by fractional Brownian or Lévy noise:
\begin{align}
  \label{eq:main_eq} X_t &= x + \int_0^t b_r(X_r)\diff r + W_t^H, \quad \quad \quad x\in \R^d,\, t\in[0,T],\\
  \label{eq:main_eq_levy}
  X_t &= x + \int_0^t b_r(X_r)\diff r + L_t, \quad \quad \quad x\in \R^d,\, t\in[0,T],
\end{align}
where $d\in\N$, $T>0$, $b\colon [0, T] \times \R^d \to \R^d$ is a function in the mixed Lebesgue space $L_q([0, T], L_p(\R^d))$, $p \in [1, \infty)$, $q\in[1,\infty]$. The noise $W^H$ is a fractional Brownian motion (fBm) of Hurst parameter $H \in (0, 1)$, and $L$ is a Lévy process. We show that if 
\begin{equation}
	\label{eq:main_cond} \frac{1-H}{q}+\frac{Hd}{p}< 1 - H,
\end{equation}
then the equation~\eqref{eq:main_eq} has a weak solution (Theorem~\ref{th:weak_existence}) and for $d=1$ equation~\eqref{eq:main_eq} has a strong solution (Theorem~\ref{th:strong_existence}). We also construct a counter-example showing the optimality of this condition (Theorem~\ref{th:counter_example}). We also show that if the process $L$ is  an $\alpha$-stable process or its ``relative'' (see Theorem~\ref{th:levy_existence} for the precise condition), $\alpha\in(1,2)$, and 
\begin{equation}
	\label{eq:main_cond_levy} \frac{\alpha-1}{q}+\frac{d}{ p}< \alpha - 1,
\end{equation}
then the equation~\eqref{eq:main_eq_levy} has a weak solution (Theorem~\ref{th:levy_existence}) and for $d=1$ it has a strong solution (Theorem~\ref{th:str_levy_existence}). This extends the recent results by Krylov \cite{Kr20}, where these statements were proved in the Brownian setting ($H=\frac12$).

Let us recall that deterministic ordinary differential equations can be ill-posed, meaning that they may have multiple or no solutions, but by adding noise, they can become well-posed. This phenomenon is called \textit{regularization by noise}.

In the case where the noise is a standard Brownian motion, the problem has been widely studied. The first results in this direction are due to Zvonkin \cite{Zvo74} and Veretennikov \cite{Ver80}, who proved the strong existence and uniqueness of solutions to the SDE
\begin{equation}\label{bmeq}
dX_t=b(t,X_t) dt +dW_t,
\end{equation}
where $b$ is a bounded measurable function and $W$ is a Brownian motion. Later, Krylov and Röckner  \cite{KR05} and Zhang \cite{Zhang11} extended these results to the case where $b\in L_q([0, T], L_p(\R^d))$, $p,q\in[1,\infty]$ and
\begin{equation} \label{eq:KR} 
	\frac{2}{q} + \frac{d}{p} < 1.
\end{equation}
This condition is often referred to in the literature as the Ladyzhenskaya--Prodi--Serrin condition (LPS). One can observe that if~\eqref{eq:KR} is satisfied, then at small scales, the noise dominates the drift. Furthermore, the noise is less regular than the drift, which allows for regularization by noise to occur. This regime is often called subcritical. For a more detailed discussion, we refer to \cite[Section~1.1]{GG23}.

If, on the contrary, $\frac{2}{q} + \frac{d}{p} > 1$ (supercritical regime), then at small scales, the drift dominates, and as a result, regularization by noise is not guaranteed. In fact, for any values of $p, q, d$ satisfying this inequality, there exists a drift $b \in L_q([0, T], L_p(\R^d))$ for which even weak uniqueness fails for SDE~\eqref{bmeq}, see  \cite[Section~1.3]{GG23}.

The weak existence is more nuanced. If the time integrability is $q=\infty$, then for any values of $p,d$ in the  supercritical regime, there exists a drift $b\in L_\infty([0, T], L_p(\R^d))$ for which the weak existence 
fails for SDE~\eqref{bmeq}, see, e.g., \cite[Example~2.1]{Sasha}. Surprisingly enough, if $q\neq\infty$, then the weak existence does hold for some supercritical values of $p,q,d$. Namely, it was discovered by Krylov \cite{Kr20} (see \cite{galeati2023note} for an alternative proof) that a weaker condition
\begin{equation} \label{eq:KRweak} 
	\frac{1}{q} + \frac{d}{p} \le 1
\end{equation}
implies the weak existence for the SDE~\eqref{bmeq} with any drift $b\in L_q([0, T], L_p(\R^d))$ for  $p\ge q$, and $b\in  L_p(\R^d,L_q([0, T]))$ for  $p\le q$. It was also shown in \cite{Kr20} that in the regime $p\le q d$ this condition is optimal: if $p,q,d$ satisfy 	$\frac{1}{q} + \frac{d}{p} > 1$ and $p\le q d$, then there exists a drift $b\in L_q([0, T], L_p(\R^d))$ for which no weak solution exists for SDE~\eqref{bmeq}. The construction of a counter-example for the remaining values of parameters (that is, $p> q d$, 	$\frac{1}{q} + \frac{d}{p} > 1$) was left open in \cite{Kr20}.

Thus, if $q=\infty$, then Krylov's condition~\eqref{eq:KRweak} reduces to the LPS condition~\eqref{eq:KR}, and a weak solution exists only for values of $p,d$ that are in the subcritical regime. If $q<\infty$, then~\eqref{eq:KRweak} is weaker than~\eqref{eq:KR}, and a weak solution also exists for some supercritical $p,d$.
 Note, however, that under additional assumptions on the drift (e.g., that the drift is divergence-free), Krylov's condition~\eqref{eq:KRweak} can be further relaxed, see \cite{ZZ} and \cite[Section~1]{hao2023sdes} for a very detailed discussion.

Much less is known for the case where the noise is a fractional Brownian motion  $W^H$. The main reason is that for $H \neq \frac12$, the fractional Brownian motion is neither a martingale nor a Markov process. Since the techniques used in \cite{Zvo74, Ver80, KR05, Kr20} for both proving well-posedness and constructing counterexamples are based on Itô's formula, they become inapplicable.

A breakthrough in understanding the well-posedness of SDEs driven by fBm was achieved in  \cite{CG} and \cite{LeSSL}, where sewing and stochastic sewing techniques were developed. Extending their ideas,  \cite{GG23} showed (among many other interesting results), the weak existence for SDE~\eqref{eq:main_eq} if $b\in L_q([0, T], L_p(\R^d))$, $p,q\in[1,\infty]$, and 
\begin{equation}
	\label{eq:GG23} \frac{1}{q} + \frac{Hd}{p} < 1-H \quad \text{and} \quad \frac{2Hd}{p}<1-H.
\end{equation}
When $H=\frac{1}{2}$, the first part of~\eqref{eq:GG23} reduces to the LPS condition~\eqref{eq:KR} and the second part becomes an additional condition, $2d < p$. Thus, one does not recover the optimal Krylov's condition~\eqref{eq:KRweak}.

We improve this result in two directions. First, we show that the extra assumption $\frac{2Hd}{p}<1-H$ can be dropped. Then, we demonstrate that the condition $\frac{1}{q} + \frac{Hd}{p} < 1-H$ can be relaxed to~\eqref{eq:main_cond}. We note that~\eqref{eq:main_cond} can be viewed as an extended Krylov's condition because it reduces to~\eqref{eq:KRweak} when $H = \frac12$.
This is the subject of Theorem~\ref{th:weak_existence}. We note that the time-independent case was studied in \cite{BLM23}. Theorem~\ref{th:counter_example} shows the optimality of the condition~\eqref{eq:main_cond} by constructing the corresponding counter-example. Theorem~\ref{th:counter_example} seems to be new even in the Brownian case $H=1/2$ as the case $p>qd$ has not been covered in  \cite{Kr20}.

Theorem~\ref{th:levy_existence} establishes weak existence for SDEs driven by a L\'evy process from a quite general class ($\alpha$-stable-like processes), under condition~\eqref{eq:main_cond_levy}. This condition is an analog of Krylov's condition for L\'evy processes. 
Previously, weak existence for~\eqref{eq:main_eq_levy} was proved in \cite[Theorem~4.1]{Zhang13} for $\alpha \in (1, 2)$ under the condition
\begin{equation*}
\frac{\alpha }{q}+\frac{d}{p} < \alpha -1,
\end{equation*}
with the autonomous case having been treated earlier by \cite{Portenko}. Weak uniqueness in the same regime was later established in \cite{Jin18}. This condition is more restrictive than \eqref{eq:main_cond_levy} and coincides with~\eqref{eq:KR} in the Brownian case. Note that all the aforementioned works \cite{Zhang13,Portenko,Jin18} considered only the case where the noise is a symmetric $\alpha$-stable process. Well-posedness for SDEs  driven by $\alpha$-stable-like noises  with more regular drifts (with H\"older drifts) was obtained in \cite{CZZ}.

There are also many recent results \cite{CdRM22,LZ22,KP22, KP25rough} tackling  the existence and uniqueness of solutions to \eqref{eq:main_eq_levy} when the drift is not necessarily a function but only a distribution. For example, in \cite[Theorems 2.12 and 3.12]{KP25rough}, the authors introduce a new notion of rough weak solutions, equivalent to solving the martingale problem, and show well-posedness. In the case of autonomous drifts, they require the drift to be in $\C^{\beta }$ with $\beta \in (\frac{2-2 \alpha }{3}, 0)$, which translates to $\frac{d}{p} < \frac{2}{3}(\alpha -1)$ if $b\in L_p(\R^d)$. While this is a very significant improvement over the usual Young condition $\beta \in (\frac{1-\alpha }{2}, 0)$ for generic drifts, in the case of integrable drifts this condition is still more restrictive than $\frac{d}{p} < \alpha -1$ from \cite{Portenko}.

To obtain these results, we used stochastic sewing techniques, the John--Nirenberg inequality, and taming singularities methods, building on \cite{BLM23}, where these results were established for the time-independent case. We would like to point out that while our general strategy  follows that of \cite{BLM23}, a direct application would lead to a suboptimal condition, $\frac{1}{q} + \frac{Hd}{p} < 1-H$. To derive a less restrictive condition~\eqref{eq:main_cond}, we had to conduct a much more intricate analysis and obtain improved integral bounds, see, in particular, Lemma~\ref{lem:step02a}.

\smallskip

The rest of the article is organized as follows. We present our main results in Section~\ref{s:mr}, and the proofs are provided in Section~\ref{sec:main_result}. All auxiliary technical results are presented in Appendix.

\smallskip
\textbf{Convention on constants}. Throughout the paper, $C$ denotes a positive constant whose value may change from line to line; its dependence is always specified in the corresponding statement.

\smallskip
\textbf{Acknowledgements}. The authors are grateful to Lucio Galeati, Máté Gerencsér, and Leonid Mytnik for  very helpful discussions.  We would  like to thank the referees for their constructive feedback and many useful suggestions, which have helped us improve the manuscript. OB has received funding from the Deutsche Forschungsgemeinschaft (DFG, German Research Foundation) under Germany's Excellence Strategy --- The Berlin Mathematics Research Center MATH+ (EXC-2046/1, project ID: 390685689, sub-project EF1-22) and DFG CRC/TRR 388 ``Rough Analysis, Stochastic Dynamics and Related Fields'', Project B08. The main part of the research was done while SG was doing an internship at the Weierstrass Institute in May--July 2023. Part of the work on the project was done during the visit of the authors to Università degli Studi di Torino. We would like to thank these institutions for providing excellent working conditions, support, and hospitality.

\section{Main results}\label{s:mr}

We begin by introducing the necessary notation and recalling the standard definitions. Let $d\in\N$, $T>0$. Denote by  $\C([0,T],\R^d)$ be the space of continuous functions $[0,T]\to\R^d$ equipped with the usual supremum norm. Let  $(\Omega, \F, \P)$ be a probability space and let $\FF = (\F_{t})_{t\in[0,T]}$ be a filtration.
It is well-known that if $W^{H}$ is a $d$-dimensional fractional Brownian motion on this space, then there exists a $d$-dimensional Brownian motion $B$ such that the following representation holds:
\begin{equation}\label{fbmr}
  W_{t}^{H} = \int_{0}^{t} K_{H}(t, s)\diff B_{s}, \quad t \in [0, T],
\end{equation}
where for $H>1/2$ the kernel $K_H$ is given by 
\begin{equation}\label{fbmr1}
K_H(t,s):=C(H,d)s^{\frac12-H}\int_s^t (r-s)^{H-\frac32}r^{H-\frac12}\,dr,
\end{equation}
and for $H\le1/2$ 
\begin{equation}\label{fbmr2}
K_H(t,s):=C(H,d)\Bigl(t^{H-\frac12}s^{\frac12-H}(t-s)^{H-\frac12}  
+(\frac12-H)s^{\frac12-H}\int_s^t (r-s)^{H-\frac12}r^{H-\frac32}\,dr\Bigr),
\end{equation}
for some constant $C=C(H,d)>0$.
We say that $W^H$ is a \textit{$\FF$-fractional Brownian motion} if there exists an $\FF$-Brownian motion $B$ such that~\eqref{fbmr} holds.

\textit{A weak solution} to the SDE~\eqref{eq:main_eq} is a pair $(X, W^{H})$ on a complete filtered probability space $(\Omega, \F, \FF, \P)$, such that $W^{H}$ is a $\FF$-fractional Brownian motion, $X$ is continuous, adapted to $\FF$, and $\P\bigl(\text{\eqref{eq:main_eq} holds for all $t\in[0,T]$}\bigr)=1$.

Now we are ready to present our first main result.
\begin{theorem}[Existence of weak solutions: fractional Brownian noise]
	\label{th:weak_existence}
	Let $d\in\N$, $H \in (0, 1)$, $p \in [1, \infty )$, $q \in [1, \infty]$, $x \in \R^d$, $T>0$, $b\in L_q([0, T], L_p(\R^d))$. Suppose that the condition~\eqref{eq:main_cond} is satisfied. Then the following holds.
	\begin{enumerate}[\rm{(}i\rm{)}]
	\item The SDE~\eqref{eq:main_eq} has a weak solution.
	\item 
Let $\{b^n, n \in \N \}$ be a sequence of smooth bounded functions converging to $b$ in $L_q([0, T], L_p(\R^d))$ as $n\to\infty$. Assume that the sequence $\{x^n, n \in \N\}$, where $x^n\in\R^d$, converges to $x$ as $n\to\infty$. Let $X^n$ be the strong solution to
	\begin{equation}
		\label{eq:approx} X^n_t = x^n + \int_0^t b_r^n(X_r^n)\diff r+ W_t^{H}, \quad t\in[0, T].
	\end{equation}
	Then the sequence $(X^n, W^{H})_{n\in\N}$ is tight in $C([0,T],\R^{2d})$, and any of its partial limits is a weak solution to the SDE~\eqref{eq:main_eq}.
	\end{enumerate}
\end{theorem}
Theorem~\ref{th:weak_existence} extends the corresponding results from \cite[Section~8]{GG23}.
If $q=\infty$, this statement was established in \cite[Theorem~2.6]{BLM23}. As mentioned before, in this case, the condition~\eqref{eq:main_cond} reduces to the condition of subcriticality. On the other hand, if $q\neq\infty$, then the values of $H,d,p$ might belong to the supercritical regime. 
 If  $H=\frac12$, then Theorem~\ref{th:weak_existence} provides an alternative proof of \cite[Theorem~3.1]{Kr20}.

The following counter-example shows that the condition~\eqref{eq:main_cond} in Theorem~\ref{th:weak_existence} is optimal, except perhaps for the equality case in~\eqref{eq:main_cond}, which is not treated. This result generalizes Krylov's counter-example \cite[Example 2.1]{Kr20} to the general case $H \neq 1/2$, although our methods of proof are very different. Note that in \cite[Example 2.1]{Kr20}, it was assumed additionally that $p\le qd$. Theorem~\ref{th:counter_example} shows that this assumption is not needed.

\begin{theorem}
	\label{th:counter_example}
	Let $d \in\N$, $H\in(0, 1)$, and $p, q \in[ 1,\infty)$  be such that
	\begin{equation}
		\label{pqnosol} \frac{1-H}{q} + \frac{Hd}{p} > 1 -H.
	\end{equation}
	Then there exists $b\in L_q([0, 1], L_p(\R^d))$ such that the equation~\eqref{eq:main_eq} has no weak solution.
\end{theorem}

In the one-dimensional case, we can show strong existence of solutions to \eqref{eq:main_eq} under the same conditions.

Recall that a \emph{strong solution} to the SDE~\eqref{eq:main_eq} is a pair $(X, W^{H})$ on a complete filtered probability space $(\Omega, \F, \P)$, such that $W^{H}$ is a fractional Brownian motion, $X$ is continuous, adapted to the completion of the filtration generated by $W^{H}$, and $\P\bigl(\text{\eqref{eq:main_eq} holds for all $t\in[0,T]$}\bigr)=1$.

\begin{theorem}[Existence of strong solutions: fractional Brownian noise]
\label{th:strong_existence}
Let $d=1$, $H \in (0, 1)$, $p \in [1, \infty )$, $q \in [1, \infty]$, $x \in \R$, $T>0$, $b\in L_q([0, T], L_p(\R))$. Suppose that the condition~\eqref{eq:main_cond} is satisfied. Then SDE~\eqref{eq:main_eq} has a strong solution.
\end{theorem}

Now, we move to the results concerning the weak existence for the SDE~\eqref{eq:main_eq_levy}. We denote by $\D([0,T],\R^d)$ the Skorokhod space of càdlàg functions $[0,T]\to\R^d$. A pair $(X, L)$ on a complete filtered probability space $(\Omega, \F, \FF, \P)$ is a \textit{weak solution} to~\eqref{eq:main_eq_levy} if $X$ is càdlàg and adapted to $\FF$, $L$ is a $\FF$–Lévy process, and $\P\bigl(\text{\eqref{eq:main_eq_levy} holds for all $t\in[0,T]$}\bigr)=1$. 

Denote by $\Phi : \R^{d}\to \R$ the characteristic exponent of $L$, that is,
\begin{equation*}
\E e^{i \langle \lambda, L_t\rangle }=:e^{-t \Phi(\lambda)},\quad t\ge0,\,\,\lambda\in\R^d,
\end{equation*}

We work under quite generic assumptions on the driving L\'evy process $L$. We assume that for some $\alpha \in (1, 2)$, $c_{1}, c_{2}, N \in (0, \infty )$ we have 
\begin{equation}
\label{eq:characteristic_bound}
c_{1} |\lambda |^{\alpha} \le \Re \Phi (\lambda)\le c_{2}|\lambda |^{\alpha }, \quad\text{when $|\lambda|\ge N$}.
\end{equation}
This is a standard assumption in the field, and it is satisfied, for example, if $L$ is a symmetric $\alpha$-stable process (in which case $\Phi(\lambda)=c_\alpha|\lambda|^{\alpha}$ for some $c_\alpha>0$), a cylindrical $\alpha$-stable process ($\Phi(\lambda)=c_\alpha\sum_{i=1}^d|\lambda_i|^{\alpha}$), a relativistic $\alpha$-stable process, a truncated $\alpha$-stable process, and so on; see \cite[Examples 2.10–2.18]{BDG24}.

\begin{theorem}[Existence of weak solutions: Lévy noise]
	\label{th:levy_existence}
	Let $\alpha \in (1, 2)$, $p \in [1, \infty )$, $q \in [1, \infty]$, $x \in \R^d$, $T>0$, $b\in L_q([0, T], L_p(\R^d))$. Let $L$ be a Lévy process satisfying~\eqref{eq:characteristic_bound} and suppose that the condition~\eqref{eq:main_cond_levy} is satisfied. Then, the following holds.
	\begin{enumerate}[\rm{(}i\rm{)}]
		\item The SDE~\eqref{eq:main_eq_levy} has a weak solution.
		\item 
		Let $\{b^n,\ n \in \N \} $ be a sequence of smooth bounded functions converging to $b$ in $L_q([0, T], L_p(\R^d))$ as $n\to\infty$. Assume that the sequence $\{x^n, n \in \N\}$, where $x^n\in\R^d$, converges to $x$ as $n\to\infty$. Let $X^n$ be the  strong solution to
		\begin{equation}
			\label{eq:approx_levy} X^n_t = x^n + \int_0^t b_r^n(X_r^n)\diff r+L_t, \quad t\in[0, T].
		\end{equation}
		Then the sequence $(X^n, L)_{n\in\N}$ is tight in $\D([0, T], \R^{2d})$,
		and any of its partial limits is a weak solution to the SDE~\eqref{eq:main_eq_levy}.
	\end{enumerate}
\end{theorem}

To the best of our knowledge, this is the first result concerning the weak existence for SDEs driven by Lévy noise in the supercritical regime. If $q=\infty$, then~\eqref{eq:main_cond_levy} reduces to the subcritical condition, and this result was obtained in \cite{Portenko} for symmetric $\alpha$-stable processes. For general $p,q,d,\alpha$ and  symmetric $\alpha$-stable noise, 
the weak existence of solutions to~\eqref{eq:main_eq_levy} was established in \cite[Theorem~4.1]{Zhang13} under the subcritical condition $\alpha/q+d/p<\alpha-1$, which is more restrictive than~\eqref{eq:main_cond_levy}. We are not aware of any previous weak existence results for SDEs of type~\eqref{eq:main_eq_levy} with non-H\"older drift driven by generic L\'evy noise. Weak existence for H\"older drifts and generic Lévy noise was obtained in \cite[Theorem~1.1]{CZZ}.

Note that in the case $d=1$, $q=\infty$, condition~\eqref{eq:main_cond_levy} is optimal; see \cite[Section~9]{MW24}. We conjecture that this condition is optimal for the full range of parameters $d,p,q,\alpha$, though this remains an open problem.

As in the fractional Brownian case, the one-dimensional setting ($d = 1$) is special: in this case, strong existence holds.

\begin{theorem}[Existence of strong solutions: Lévy noise]
	\label{th:str_levy_existence}
	Let $d=1$,  $\alpha \in (1, 2)$, $p \in [1, \infty )$, $q \in [1, \infty]$, $x \in \R$, $T>0$, $b\in L_q([0, T], L_p(\R))$. Let $L$ be a Lévy process satisfying~\eqref{eq:characteristic_bound} and suppose that the condition~\eqref{eq:main_cond_levy} is satisfied. Then SDE~\eqref{eq:main_eq_levy} has a strong solution.
\end{theorem}

We recall that for $d=1$ strong existence of solutions to SDE \eqref{eq:main_eq_levy} driven by a symmetric $\alpha$-stable process, $\alpha \in (1, 2)$, was established in \cite{ABM} for $b\in\C^\beta$, $\beta>\frac12-\frac\alpha2$. If the drift $b\in L_p(\R)$ this condition becomes $\frac1p<\frac\alpha2-\frac12$ which is worse than our condition~\eqref{eq:main_cond_levy},  which is $\frac1p<\alpha-1$.

Let us briefly sketch our proof strategy. The proof of the counterexample relies on deterministic arguments and is presented in Section~\ref{sec:nwe}. A central component in establishing weak existence is a moment bound for additive functionals of the driving noise perturbed by a drift of finite $1$-variation (a Krylov-type estimate; see Lemma~\ref{lem:step02a}). This bound is derived using a recent refinement of the stochastic sewing method (the Rosenthal–Burkholder stochastic sewing lemma) together with the John–Nirenberg inequality. It is then used to obtain a priori bounds on the $1$-variation of the solutions (Corollary~\ref{cor:step02b}), which, in turn, imply tightness. The final step is to establish stability, which also relies on the moment bounds for additive functionals (Lemma~\ref{lem:stability}). Strong existence for $d = 1$ is shown using a version of the Gy\"{o}ngy–Pardoux argument \cite{GP93a, GP93b}.

\section{Proof of the main results} \label{sec:main_result}

Let us introduce further notation. For $\gamma\in(0,1)$ and a Borel subset $Q$ of $\R^k$, $k\in\N$, we denote by $\C^\gamma(Q,\R^d)$ the space of all $\gamma$-Hölder continuous functions $f\colon Q\to\R^d$.
Let $\B^{\gamma}_\rho=\B^{\gamma}_{\rho,\infty}$, where $\gamma\in\R$ and $\rho\in[1,\infty]$, be the Besov space of regularity $\gamma$ and integrability $\rho$. Throughout the proofs, we will use 
that for any Schwartz distribution $f$, $\rho \in [1, \infty]$, $\gamma<0$, $\lambda \in [0, 1]$ and $a, b \in \R$ one has for some $C=C(\gamma, \rho,\lambda)$
\begin{align}\label{besovpr}
&\| f(a + \cdot) \|_{\B_\rho^{\gamma}}= \| f \|_{\B_\rho^{\gamma}} \quad \text{and}\quad \| f(a + \cdot) - f(b + \cdot)\|_{\B_\rho^{\gamma}} \le C |b-a|^{\lambda}\| f \|_{\B_\rho^{\gamma +\lambda}};\\
&\|f(\lambda \cdot)\|_{\B_\rho^{\gamma}}\le \lambda^{\gamma-\frac{d}{\rho}} \|f\|_{\B_\rho^{\gamma}}
\label{besovscaling}.
\end{align}

For a function $f\colon [s,t]\to\R^d$, where $0\le s\le t$, denote its $1$-variation by
\[
[f]_{1-\var;[s,t]}:=\sup_\Pi\sum_{i=0}^{n-1} |f(t_{i+1})-f(t_i)|,
\]
where the supremum is taken over all the partitions $\Pi=\{t_0=s,t_1,\hdots, t_n=t\}$ of the interval $[s,t]$. The space of the functions $f\colon [s,t]\to\R^d$ with finite $1$-variation will be denoted by $\C^{1-\var}([s,t],\R^d)$.

For $0\le a \le b$, $n\in\N$, we denote  the simplex
\begin{equation}\label{simplex}
\Delta^n_{a, b}:=\{(t_1, \dots, t_n)\ \text{where}\ a \le t_1 \le \dots \le t_n \le b \}.
 \end{equation}

If a filtration $(\F_t)_{t\in[0,T]}$ is given, then we denote by $\E^t$ conditional expectation with respect to $\F_t$, $t\in[0,T]$. 

\subsection{Proof of the Theorem~\ref{th:counter_example}: no weak existence}\label{sec:nwe}

To show that under the condition~\eqref{pqnosol} the SDE~\eqref{eq:main_eq} has no solutions, we first begin with the following deterministic result.

\begin{lemma}
  \label{lem:deterministic_couter_example}
  Let $\alpha, \beta > 0$, $\gamma\in(0, 1)$ be such that
  \begin{equation}
    \label{eq:abnosol} \alpha \gamma + \beta (1-\gamma) > 1-\gamma.
  \end{equation}
  Let $f=(f^{1}, \dots, f^{d})\in\C^{\gamma}([0,1], \R^d)$ and suppose that $f(0) = 0$.  Then, the deterministic equation
  \begin{equation}
    \label{eq1} X_t^i = - \int_0^t \Big (\sign(X_s^i)|X_s|^{-\alpha}\1_{0<|X_{s}|<1} s^{-\beta} + \1_{|X_{s}|=0, s>0}s^{-2} \Big  )\diff s + f^{i}(t) 
 \end{equation}
 for $ t\in [0, 1]$ and $i=1,\hdots,d$, where the integral is understood as a \textbf{Lebesgue} integral, has no continuous solution $X=(X^1,\dots,X^d)\colon [0, 1] \to \R^d$.
\end{lemma}

The idea is that the first term of the drift $-\sign(X_s^i)|X_s|^{-\alpha}\1_{0<|X_{s}|<1} s^{-\beta}$ pushes $X$ towards $0$, while the additive term $f$, which in our particular case will be a path of a fractional Brownian motion, may push $X$ away from $0$. When~\eqref{eq:abnosol} is satisfied, the effect of the drift will be stronger, which will force the solution to stay at $0$, while second term of the drift $ \1_{|X_{s}|=0, s>0}s^{-2}$ prevents $0$ from being a solution to~\eqref{eq1}. 

\begin{proof}
Assume the contrary and let $X$ be a continuous solution to~\eqref{eq1} satisfying~\eqref{eq:abnosol}. We see that $X\equiv0$ cannot solve~\eqref{eq1} around $0$ because $s \mapsto s^{-2}$ is not integrable. Fix any time $t_{3}\in(0,1)$ such that $0<|X_{t_{3}}|<1$ (such $t_3$ exists, by continuity of $X$). With such a time $t_{3}$ in hand, define
  \begin{equation*}
    t_2 = \inf \{s>0: |X_s| \ge |X_{t_{3}}| \}.
  \end{equation*}
 Now take $i\in\{1,\hdots,d\}$ such that $|X_{t_2}^i| \ge |X_{t_2}^j|$ for every $1 \le j \le d$.
  With such a choice of $i$ we have $|X_{t_2}| \le d |X^i_{t_2}|$.
  Put now 
  \begin{equation*}
    t_1 = \sup \{s \le t_2: X^i_s = 0 \}.
\end{equation*}
We see that $t_1$ is well-defined since $X_0^i=0$ by assumption, and we note that $t_{1} \le t_{2} \le t_{3}$. Clearly, $X^i_{t_2}\neq0$, and we suppose without loss of generality that  $X^i_{t_2} > 0$. By the definition of $t_{1}$, it follows that $X^i_{s} > 0$ for all $s\in(t_1,t_2]$. By the definition of $t_{2}$ and $t_{3}$ we see that $|X_{s}| \le |X_{t_{3}}| <1$ for all $s \le t_{2}$. Therefore, using the continuity of $X$ and~\eqref{eq1}, we deduce
  \begin{equation*}
    |X_{t_3}| = |X_{t_2}| \le d|X_{t_2}^i| = d (X_{t_2}^i - X_{t_1}^i) = -d\int_{t_1}^{t_2} |X_s|^{-\alpha}s^{-\beta}\diff s + d(f^{i}(t_2) - f^{i}(t_1)).
\end{equation*}
  Now we use that $|X_s| \le |X_{t_2}| = |X_{t_{3}}|$ by definition of $t_2$ and that $f\in\C^{\gamma}$.
 We get for $K:=\|f\|_{\C^{\gamma}}$
   \begin{equation*}
    |X_{t_{3}}| \le -d|X_{t_{3}}|^{-\alpha}\int_{t_1}^{t_2} s^{-\beta}\diff s + dK(t_2-t_1)^{\gamma} \le -dt^{-\beta}_2|X_{t_{3}}|^{-\alpha}(t_2-t_1) + dK(t_2-t_1)^{\gamma}.
  \end{equation*}
  Taking the supremum over all values of $t_2 - t_1$ and noting that $t_2 \le  t_{3}$,
   \begin{equation*}
    |X_{t_{3}}| \le \sup_{u \ge 0} \bigl(-d t_{3}^{-\beta}|X_{t_{3}}|^{-\alpha}u + dKu^{\gamma} \bigr).
  \end{equation*}
  We compute the supremum by taking the derivative, and we find $u_{\text{max}}^{\gamma -1} = C t_{3}^{-\beta}|X_{t_{3}}|^{-\alpha}$ with $C = C (d, K, \gamma) > 0$. Therefore, by dominating the negative term by $0$ in the previous inequality, we obtain
   \begin{equation*}
    |X_{t_{3}}| \le C \bigl(|X_{t_{3}}|^{-\alpha}t_{3}^{-\beta}\bigr)^{\frac{\gamma}{\gamma -1}} = C |X_{t_{3}}|^{\frac{\alpha \gamma}{1-\gamma}}t_{3}^{\frac{\beta \gamma}{1-\gamma}}, \quad \text{so} \quad |X_{t_{3}}|^{1-\gamma -\alpha \gamma} \le C t_{3}^{\beta \gamma}.
  \end{equation*}
  Remember now that we can choose $t_{3}$ arbitrarily close to $0$, so if $1 - \gamma  - \alpha \gamma  \le  0$ then knowing that $|X_{0}| = 0$ we have a contradiction by continuity of $X$.
  Now, in the case $1 -\gamma -\alpha \gamma >0$, we find (since time $t_3$ was arbitrary and the constant $C$ does not depend on $t_3$)
  \begin{equation}
    \label{condsol} |X_t| \le C t^{\frac{\beta \gamma}{1-\gamma -\alpha \gamma}} \quad \text{for any $t \ge 0$}.
  \end{equation}
  For equation (\ref{eq1}) to be satisfied, we supposed that the Lebesgue integral is well-defined, and therefore 
  \[
    t \mapsto |X_t|^{-\alpha}\1_{0<|X_{t}|<1} t^{-\beta} + \1_{|X_{t}|=0}t^{-2} 
  \]
    is in $L_1([0, t_2])$.
  The function $t \mapsto t^{-1}$ is not integrable at $t=0$, thus for infinitely many $s_{i}$ close to $0$,
  \begin{equation}
    \label{condinteg} 
    |X_{s_{i}}|^{-\alpha}\1_{|X_{s_{i}}|>0} s_{i}^{-\beta} + \1_{|X_{s_i}|=0}s_{i}^{-2} 
\le t_{i}^{-1} 
  \end{equation}
  where we used again that $|X_r|<1$ for $r\in[0,t_2]$. 
If at one of such $s_{i}$, $X_{s_{i}}= 0$, then~\eqref{condinteg} implies $s_{i}^{-2} \le s_{i}^{-1}$ which is impossible. Therefore, $X_{s_{i}} \neq 0$ and we get
\begin{equation}
\label{lowerbound}
   |X_{s_{i}}|^{-\alpha} s_{i}^{-\beta} \le s_{i}^{-1}\quad \text{so} \quad |X_{s_{i}}| \ge s_{i}^{\frac{1-\beta }{\alpha }}.
\end{equation}
  Combining (\ref{condsol}) and (\ref{lowerbound}) and taking $s_{i} \to 0$ we get
   \begin{equation*}
    \frac{1-\beta}{\alpha} \ge \frac{\beta \gamma}{1-\gamma -\alpha \gamma}, \quad \text{which gives} \quad \beta (1-\gamma) + \alpha \gamma \le 1 - \gamma.
  \end{equation*}
  This contradicts (\ref{eq:abnosol}).
\end{proof}

\begin{proof}[Proof of Theorem~\ref{th:counter_example}]
  Because inequality~\eqref{pqnosol} is strict, there exist  $\alpha, \beta > 0$ and $\gamma \in (0, 1)$ such that equation~\eqref{eq:abnosol} is satisfied with
   \begin{equation*}
    \alpha <\frac{d}{p},\quad \beta < \frac{1}{q},\quad \text{and} \quad \gamma < H.
  \end{equation*}
  We define the function $b : [0, 1]\times \R^d \to \R^d$ by
   \begin{equation*}
    b_t^i(x) :=   
-\sign(x^i)|x|^{-\alpha} \1 _{|x|<1}t^{-\beta} -  \1_{|x|=0, t>0}t^{-2},\quad t\in[0,1],\,x\in\R^d.
  \end{equation*}
We see that $b\in L_q([0, 1], L_p(\R^d))$ because $\beta < \frac{1}{q}$ and  $\alpha < \frac{d}{p}$.
  A fractional Brownian motion $W^H$ is almost surely  in $\C^{\gamma}$ because $\gamma < H$.
  Applying Lemma~\ref{lem:deterministic_couter_example} with $f = W^H(\omega)$, this proves that equation (\ref{eq:main_eq}) with this choice of $b$ has almost surely no weak solution.
\end{proof}

\subsection{Recap of the sewing lemmas}
The proofs of Theorem~\ref{th:weak_existence} and Theorem~\ref{th:levy_existence} rely on the stochastic sewing lemma introduced in \cite{LeSSL}. For the convenience of the reader, we recall the corresponding statement as well as its modified version from \cite{BLM23}, which we also use.

Fix $0\le S\le T$ and the filtration $(\F_t)_{t\in[S,T]}$. 
Recall the notation~\eqref{simplex}. Let $(A_{s,t})_{(s,t)\in\Delta^2_{S,T}}$
be a collection of random elements taking values in $\R^d$, such that $A_{s,t}$ is $\F_t$-measurable whenever $(s,t)\in \Delta^2_{S,T}$. For any  $(s, u, t) \in \Delta^3_{S, T}$ denote as usual
\[
\delta A_{s,u,t}:=A_{s,t}-A_{s,u}-A_{u,t}.
\]

\begin{definition}\label{d:control}
 Let  $w : \Delta_{S, T}^2 \times \Omega \to \R_+$ be a measurable function. We say that it is a \textit{random control} if  for any sequence $(s_{n}, t_{n}) \in  \Delta_{S, T}^2$ such that $s_{n} \to s$ and $t_{n} \to t$, then
\[
  w(s_{n}, t_{n}) \to w(s, t) \quad \quad \text{almost surely},
\]
and if $w$ is superadditive, meaning that for any $(s, u, t) \in \Delta^3_{S, T}$ it satisfies  $w (s, u) +w (u, t) \le w (s, t)$ almost surely. 

 When $w$ does not depend on $\Omega$, we say that the control is deterministic.
\end{definition}

\begin{definition}
  \label{def:riemann_sums}
    We say that a random process $\A : \Omega \times [S, T] \to \R^d$ is a limit of Riemann sums of $\{A_{s, t}, (s, t) \in \Delta_{S, T}^2\}$ if, for every sequence of partitions $\Pi_n={\{s=t_0^n, t_1^n,\hdots, t_{k_n}^n=t\}}$, $n\in\Z_+$, of $[s, t] \subseteq [S, T]$ having a mesh size going to zero, we have
  \begin{equation}
    \label{eq:convergence} \sum_{i=0}^{k_n} A_{{t_i^n},{t_{i+1}^n}} \longrightarrow \A_t-\A_s\ \text{in probability as $n\to\infty$}.
  \end{equation}
\end{definition}

We begin with the stochastic sewing lemma of \cite{LeBanach} applied in the particular case where $\E^s\delta A_{s, u, t} = 0$ for all $(s, u,t) \in \Delta^3_{S, T}$.
\begin{proposition}[{\cite[Theorem~3.1 and Corollary~3.2]{LeBanach}}]\label{prop:simple_sewing}
  Let $\A : \Omega \times [S, T] \to \R^d$ be a random process that is a limit of Riemann sums of $(A_{s, t})$ as in Definition~\ref{def:riemann_sums}.
  Suppose that there exist $m \in [2, \infty)$, $\varepsilon > 0$, and a \textbf{deterministic} control $\omega$, such that for all $(s, u,t) \in \Delta^3_{S, T}$,
  \begin{enumerate}[$($a$)$]
    \item $\E^s\delta A_{s, u, t} = 0$; 
    \item $\| A_{s, t} \|_{L_m(\Omega)} \le\omega (s, t)^{\frac{1}{2}+\varepsilon}$.
  \end{enumerate}
  Then there exists a constant $C = C(m, \varepsilon)$, such that for every $ (s, t) \in \Delta^2_{S, T}$,
  \begin{equation*}
    \| \A_t -\A_s \|_{L_m(\Omega)} \le C \omega (s, t)^{\frac{1}{2}+\varepsilon}.
  \end{equation*}
\end{proposition}

The next statement is the Rosenthal--Burkholder stochastic sewing lemma. Its main advantage over the standard stochastic sewing lemma presented above is that to bound the $m$-th moment of $\A_t$, one only has to provide a very loose bound on high moments of $A_{s,t}$. The price to pay is that instead of controlling the second moments of $\delta A_{s,u,t}$, one now has to control the second \textit{conditional} moments of this quantity.

\begin{proposition}[{\cite[Theorem 3.6]{BLM23}}]
  \label{prop:rosenthal_sewing}
  Let $\A : \Omega \times [S, T] \to \R^d$ be a random process that is a limit of Riemann sums of $(A_{s, t})$ as in Definition~\ref{def:riemann_sums}.
  Suppose that there exist $\eps_0, \eps_1, \eps_2>0$ and random controls $\omega_1$ and $\omega_2$ such that for all $(s, u, t)\in \Delta^3_{S, T}$,
  \begin{enumerate}[$($a$)$]
    \item $|\E^u\delta A_{s, u, t}| \le \omega_1(s, t)(t-s)^{\varepsilon_1}$ almost surely;
    \item $\E^u[|\delta A_{s, u, t}|^2]^{\frac{1}{2}} \le \omega_2(s, t)^{\frac{1}{2}}(t-s)^{\varepsilon_2}$ almost surely;
    \item for any integer $n\ge 1$, there exists a constant $\Gamma_n$ independent of $s,t$ such that $\| A_{s, t} \|_{L_n(\Omega)} \le \Gamma_n(t-s)^{\varepsilon_0}$.
  \end{enumerate}
  Then for any $m \in [2, \infty)$ there exist $\ell=\ell(m, \eps_0) \in \N$ and $C = C(m, \eps_0, \eps_1, \eps_2)$ such  that for every $ (s, t) \in \Delta^2_{S, T}$,
  \begin{align*}
    \| \A_t -\A_s \|_{L_m(\Omega)}& \le C \Bigl (\Gamma_\ell (t-s)^{\eps_0} + \|\omega_1 (s, t)\|_{L_m(\Omega)}(t-s)^{\eps_1} + \|\omega_2 (s, t)^{\frac12}\|_{L_m(\Omega)}(t-s)^{\eps_2}\Bigr ).
  \end{align*}
\end{proposition}

\subsection{Main integral bound}\label{s:33}

This section is crucial for the overall proof of our main results. First, after a thorough preparation, we obtain a certain bound on additive functionals of the solutions to the SDEs~\eqref{eq:main_eq} and~\eqref{eq:main_eq_levy} in Lemma~\ref{lem:step02a}. Then, we use this bound to obtain a priori bounds on the $1$-variation of solutions to these SDEs. Since the driving noises of these SDEs share many similar features, to avoid unnecessary repetitions, we will work in a unified setting, which, as we demonstrate later, applies both to $W^H$ and $L$.

Without loss of generality, assume that the time interval is $[0,1]$. Until the end of the section, we 
fix the filtration $(\F_t)_{t\in[0,1]}$, $H\in(0,1)$ and a random process $Y : [0,1] \times \Omega \to \R^d$ adapted to $(\F_t)$. Denote 
\[
Y_{s,t}:=Y_t-\E^s Y_t,\quad (s,t)\in \Delta^2_{0,1}.
\]

\begin{assumption}
 \label{main_assumption} Suppose the following conditions hold for the process $Y$:
\begin{enumerate}[$($i$)$]
	\item  for any $(s, t) \in \Delta^2_{0, 1}$, the random variable $Y_{s, t}$ is independent of $\F_s$;
	\item for any $(s, u,t) \in \Delta^3_{0, 1}$, the random variable $\E^u Y_{s, t}$ has a density $p_{s,u,t}$ with respect to the Lebesgue measure. Furthermore, there exists a constant $C = C(H,d)>0$ such that for any $(s, u,t) \in \Delta^3_{0, 1}$
	\[
	\|p_{s,u,t}\|_{L_\infty(\R^d)}\le C \bigl( (u-s)^{-Hd}+((t-s)^{2H}-(t-u)^{2H})^{-\frac{d}2}\bigr);
	\]
	\item for any $\beta<0$ and $\rho\in[1,\infty]$ there exists a constant $C = C(H,d,\beta, \rho)$ such that for any measurable function $f\colon \R^d \to \R^d$ and any $(s, t) \in \Delta^2_{0, 1}$,
\begin{equation}
	\label{eq:hyp_smoothing_nnew}
	 \big \|\E  f(Y_{s, t}+\cdot) \big \|_{L_\rho(\R^d)} \le C \| f \|_{\B_\rho^{\beta}}(t-s)^{\beta H}.
\end{equation}
\end{enumerate}  
\end{assumption}

Throughout this section, we assume that the process $Y$ and the filtration $(\F_t)_{t\in[0,1]}$ satisfy Assumption~\ref{main_assumption}.

To simplify the notation, we introduce the following convention. We put for $\rho\in[1,\infty]$
\begin{equation}\label{Dspace}
\DD^\beta_\rho:=\B^\beta_\rho(\R^d),\,\,\text{if $\beta<0$;}\qquad \DD^0_\rho:=L_\rho(\R^d).
\end{equation}	
This allows us to avoid additional complications with the Besov spaces of zero regularity. We note in particular that by Minkowski's inequality $\|\E  f(Y_{s, t}+\cdot) \big \|_{L_\rho(\R^d)} \le \| f \|_{L_\rho(\R^d)}$, and thus~\eqref{eq:hyp_smoothing_nnew} implies
\begin{equation}
	\label{eq:hyp_smoothing_new}
	\big \|\E  f(Y_{s, t}+\cdot) \big \|_{L_\rho(\R^d)} \le C \| f \|_{\DD_\rho^{\beta}}(t-s)^{\beta H},
\end{equation}
for any $\beta\le0$ and $\rho\in[1,\infty]$. 

We begin with the following simple technical bounds. 
\begin{lemma}
For any $\beta\le 0$ and $\rho\in[1,\infty)$ there exists a constant $C= C(H,d,\beta, \rho)$ such that for any $(s, u,t) \in \Delta^3_{0, 1}$ and any measurable function $f\colon\R^d\to\R^d$ we have
\begin{equation}
	\label{eq:hyp_smoothing}
\|\E^u f(Y_{s,t})\|_{L_\rho(\Omega)} \le C \| f \|_{\DD_\rho^{\beta}}(t-u)^{\beta H}\bigl((u-s)^{-\frac{Hd}\rho}+\1_{H<\frac12}(u-s)^{-\frac{d}{2\rho}}(t-u)^{\frac{d}{2\rho}-\frac{Hd}\rho}\bigr).
\end{equation}
If, in addition, $f$ is continuous, then for any $\lambda\in(0,1]$ there exists $C= C(H,d,\lambda, \rho)$ such that for any $\delta\in \R^{d}$,
\begin{equation}\label{eq:hyp_smoothing2}
\E | f(Y_{s,t}+\delta)-f(Y_{s,t})| \le C |\delta|^\lambda\|f\|_{L_\rho(\R^d)} (t-s)^{-\lambda H-\frac{Hd}\rho}.
\end{equation}
\end{lemma}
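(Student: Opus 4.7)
The plan is to prove the two bounds separately. For \eqref{eq:hyp_smoothing}, the key maneuver is the martingale-type decomposition $Y_{s,t}=Y_{u,t}+Z_u$, where $Z_u:=\E^u Y_{s,t}$ is $\F_u$-measurable while $Y_{u,t}$ is independent of $\F_u$ by assumption~(i). Setting $g(y):=\E f(Y_{u,t}+y)$, I would use this independence to rewrite $\E^u f(Y_{s,t})=g(Z_u)$; the smoothing hypothesis \eqref{eq:hyp_smoothing_new} applied on the subinterval $[u,t]$ gives $\|g\|_{L_\rho(\R^d)}\le C\|f\|_{\DD_\rho^\beta}(t-u)^{\beta H}$. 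Assumption~(ii) equips $Z_u$ with a density $p_{s,u,t}$, so
\begin{equation*}
\|g(Z_u)\|_{L_\rho(\Omega)}^\rho=\int_{\R^d}|g(y)|^\rho p_{s,u,t}(y)\diff y\le \|p_{s,u,t}\|_{L_\infty}\|g\|_{L_\rho(\R^d)}^\rho,
\end{equation*}
which reduces the problem to translating $\|p_{s,u,t}\|_\infty^{1/\rho}$ into the two-term expression of \eqref{eq:hyp_smoothing}.

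For this last step I would invoke subadditivity $(a+b)^{1/\rho}\le a^{1/\rho}+b^{1/\rho}$, so only the term $((t-s)^{2H}-(t-u)^{2H})^{-d/2}$ is delicate. Writing $a:=t-u$, $b:=u-s$, the identity $(a+b)^{2H}-a^{2H}=2H\int_a^{a+b}r^{2H-1}\diff r$ is analyzed by monotonicity. For $H\ge 1/2$ the integrand is nondecreasing, yielding $(a+b)^{2H}-a^{2H}\ge b^{2H}$, so the second density term is absorbed into $(u-s)^{-Hd/\rho}$, which explains the absence of an $\I_{H<1/2}$ contribution. For $H<1/2$ the integrand is decreasing; on $\{b\le a\}$ I would bound it below by $(2a)^{2H-1}$, giving $(a+b)^{2H}-a^{2H}\ge c\,b\,a^{2H-1}$ and producing the indicator term $(u-s)^{-d/(2\rho)}(t-u)^{d(1/2-H)/\rho}$, while on $\{b>a\}$ the bound $(2b)^{2H-1}$ gives $(a+b)^{2H}-a^{2H}\ge c\,b^{2H}$, which is again absorbed into $(u-s)^{-Hd/\rho}$.

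For \eqref{eq:hyp_smoothing2} I would specialize assumption~(ii) at $u=t$: since $Y_{s,t}$ is $\F_t$-measurable, it has density $p$ with $\|p\|_{L_\infty}\le 2(t-s)^{-Hd}$. Interpolation with $\|p\|_{L_1}=1$ gives $\|p\|_{L_{\rho'}}\le C(t-s)^{-Hd/\rho}$, so H\"older's inequality yields the baseline $\E|f(Y_{s,t}+\delta)-f(Y_{s,t})|\le C(t-s)^{-Hd/\rho}\|f(\cdot+\delta)-f\|_{L_\rho}$, which already produces the correct $(t-s)$-power. In the large-shift regime $|\delta|\ge(t-s)^H$, the trivial bound $\|f(\cdot+\delta)-f\|_{L_\rho}\le 2\|f\|_{L_\rho}$ together with $|\delta|^\lambda(t-s)^{-\lambda H}\ge 1$ closes the estimate. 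The main obstacle is the complementary regime $|\delta|<(t-s)^H$: continuity of $f$ alone gives no quantitative modulus of continuity in $L_\rho$, so the $|\delta|^\lambda$ factor must be produced by the smoothing of the density rather than by regularity of $f$. My plan here is to rescale $\tilde Y:=Y_{s,t}/(t-s)^H$ (whose density is uniformly bounded), reducing matters to the case $t-s=1$ with shift $\tilde\delta=\delta/(t-s)^H$, and then combine the Besov shift inequality \eqref{besovpr} with the embedding $L_\rho\hookrightarrow\DD_\rho^{-\eps}$ applied to $f(\cdot+\tilde\delta)-f$ to import the $|\tilde\delta|^\lambda$ factor through \eqref{eq:hyp_smoothing_new}.
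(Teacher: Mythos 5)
Your treatment of \eqref{eq:hyp_smoothing} is correct and essentially identical to the paper's: the decomposition $Y_{s,t}=Y_{u,t}+\E^u Y_{s,t}$ with independence of $Y_{u,t}$ from $\F_u$, the density bound from Assumption (ii), and the smoothing hypothesis on $[u,t]$; your monotone-integrand argument for $(t-s)^{2H}-(t-u)^{2H}$ is a harmless variant of the paper's elementary inequalities ($\ge(u-s)^{2H}$ for $H\ge\tfrac12$, $\ge(u-s)(t-s)^{2H-1}$ for $H<\tfrac12$).

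The second bound is where there is a genuine gap. The quantity that the smoothing machinery can control is $\bigl|\E\bigl[f(Y_{s,t}+\delta)-f(Y_{s,t})\bigr]\bigr|$ (expectation \emph{outside} the absolute value): \eqref{eq:hyp_smoothing_new} bounds $x\mapsto\E h(Y_{s,t}+x)$, i.e.\ a convolution of $h$ with the law of $Y_{s,t}$, and this averaging is exactly what converts the negative-regularity norm $\|f(\cdot+\delta)-f\|_{\B^{-\lambda}_\rho}\le C|\delta|^\lambda\|f\|_{\B^0_\rho}$ into a useful estimate. This is the version the paper proves and the one actually used later (see Step 2 of \cref{step02a}). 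Your plan for the small-shift regime $|\delta|<(t-s)^H$ pushes $f(\cdot+\tilde\delta)-f$ through \eqref{eq:hyp_smoothing_new} and then silently upgrades the resulting control of $\E h(Y)$ to control of $\E|h(Y)|$; that step cannot work, since mere continuity of $f$ gives no quantitative $L_\rho$-modulus, and in fact the inequality with $\E|\cdot|$ fails in that regime: for $d=1$, $f(x)=\sin(Nx)\phi(x)$ with a fixed bump $\phi\ge0$ and $\delta=\pi/N$, the left-hand side stays of order $\E|f(Y_{s,t})|$ while the right-hand side tends to $0$ as $N\to\infty$. So either you target $|\E[\cdot]|$ (the statement as used), or the claim is out of reach. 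Moreover, even for the $|\E[\cdot]|$ version your sketch is incomplete: applying \eqref{eq:hyp_smoothing_new} with integrability $\rho$ only yields an $L_\rho(\R^d)$ bound in the shift variable, which does not control the value at shift zero. The paper's fix is to apply the hypothesis with $\rho=\infty$ at regularity $-\lambda-\frac{d}{\rho}$, use the embedding $\B^{-\lambda}_\rho\subset\B^{-\lambda-\frac{d}{\rho}}_\infty$ and the continuity of $x\mapsto\E g(Y_{s,t}+x)$ to evaluate at $x=0$; this produces both the factor $|\delta|^\lambda$ and the factor $(t-s)^{-\lambda H-\frac{Hd}{\rho}}$ for \emph{all} $\delta$ simultaneously, so neither your rescaling of $Y_{s,t}$ nor the case split $|\delta|\gtrless(t-s)^H$ is needed.
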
	
\begin{proof}
Let  $(s, u,t) \in \Delta^3_{0, 1}$. It follows from the definition that $Y_{s,t}=Y_{u,t}+\E^u Y_{s,t}$ and that $Y_{u,t}$ is independent of $\F_u$. Therefore, using Assumption~\ref{main_assumption}, the freezing lemma (Lemma \ref{lem:freezing}) and~\eqref{eq:hyp_smoothing_new},  we deduce for any $\beta\le0$
\begin{align}\label{impboundc}
\|\E^u f(Y_{s,t})\|_{L_\rho(\Omega)}^\rho&=\int_{\R^d} \E |f(Y_{u,t}+x)|^\rho p_{s,u,t}(x)\,dx\nn\\
&\le C\bigl((u-s)^{-Hd}+((t-s)^{2H}-(t-u)^{2H})^{-\frac{d}2}\bigr)  \|\E  f(Y_{u, t}+\cdot) \|_{L_\rho(\R^d)}^\rho\nn\\
&\le C  \| f \|_{\DD_\rho^{\beta}}^\rho (t-u)^{\beta \rho H}\bigl((u-s)^{-Hd}+((t-s)^{2H}-(t-u)^{2H})^{-\frac{d}2}\bigr).
\end{align}
If $H\ge1/2$, then $(t-s)^{2H}-(t-u)^{2H}\ge (u-s)^{2H}$. Substituting this into the above inequality, we deduce the desired bound~\eqref{eq:hyp_smoothing}.

If 	$H<1/2$, then  $(t-s)^{2H}-(t-u)^{2H}\ge 2H (u-s)(t-s)^{2H-1}$. Therefore,
\[
((t-s)^{2H}-(t-u)^{2H})^{-\frac{d}2}\le C (u-s)^{-\frac{d}2}(t-s)^{\frac{d}2-Hd}\le C 
(u-s)^{-Hd}+C(u-s)^{-\frac{d}2}(t-u)^{\frac{d}2-Hd}
\]
for $C=C(H,d)$. 
Combining this with~\eqref{impboundc}, we obtain~\eqref{eq:hyp_smoothing}.

Next, to prove~\eqref{eq:hyp_smoothing2}, we fix $\lambda\in(0,1]$. We note that for any continuous $g\colon\R^d\to\R^d$, we have from~\eqref{eq:hyp_smoothing_new}
\begin{equation*}
|\E g(Y_{s,t})|\le \|\E  g(Y_{s,t}+\cdot)\|_{L_\infty(\R^d)}\le C \|g\|_{\DD^{-\lambda-d/\rho}_\infty} (t-s)^{-\lambda H-\frac{Hd}\rho}\le C \|g\|_{\DD^{-\lambda}_\rho} (t-s)^{-\lambda H-\frac{Hd}\rho},
\end{equation*}
where $C=C(H,d,\lambda,\rho)$ and in the first inequality, we used that the function $x\mapsto \E  g(Y_{s,t}+x)$ is continuous. Therefore, using~\eqref{besovpr}, we deduce for any $\delta\in\R^d$
\begin{equation*}
|\E  f(Y_{s,t}+\delta)-f(Y_{s,t})|\le C \|f(\cdot)-f(\delta+\cdot)\|_{\DD^{-\lambda}_\rho} (t-s)^{-\lambda H-\frac{Hd}\rho}
\le C |\delta|^\lambda\|f\|_{\B^{0}_\rho} (t-s)^{-\lambda H-\frac{Hd}\rho},
\end{equation*}
for $C=C(H,d,\lambda,\rho)$. Since $\|\cdot\|_{\B^{0}_\rho}\le C \|\cdot\|_{L_\rho(\R^d)}$, this implies~\eqref{eq:hyp_smoothing2}.
\end{proof}

Now we are ready to prove a preliminary bound on an additive functional of $Y$. Recall the convention~\eqref{Dspace}.
\begin{lemma}
  \label{lem:step01}
  Let  $f: [0, 1] \times \R^d \to \R^d$ be a bounded  function and suppose that $f\in L_q([0, 1], \DD_\rho^{\beta}(\R^d))$, where 
  \begin{equation}
    \label{eq:fsthyplemma} q\in[1,\infty), \quad \rho \in [2,\infty),  \quad \frac{1}{q} + \frac{Hd}{\rho} - H \beta <1, \quad  \text{and}\quad \beta\in(-\frac{1}{2H},0].
  \end{equation}
 Then there exists a constant  $C = C(H, d,\beta, \rho, q)$ such that for any  $(s,t)\in\Delta^2_{0,1}$ one has
  \begin{equation}\label{bstep01}
    \Big \| \int_s^t f_r(Y_{s, r}) \diff r \Big \|_{L_\rho (\Omega)} \le C\| f \|_{L_q([s, t], \DD_\rho^{\beta}(\R^d))} (t-s)^{1-\frac{1}{q}-\frac{Hd}{\rho} +\beta H}.
  \end{equation}
\end{lemma}
\begin{proof}
Fix $ 0 \le S_{0} < S \le T \le 1$. We will show that~\eqref{bstep01} holds for an integral between $S_{0}$ and $T$, and to do that, we will apply the stochastic sewing lemma (Proposition~\ref{prop:simple_sewing}) to the random variables defined by
\begin{equation*}
    A_{s, t} := \E^s\int_s^t f_r(Y_{S_0, r}) \diff r \quad \quad \text{and} \quad \quad \A_t:=\int_{S_0}^t f_r(Y_{S_0, r})\diff r
\end{equation*}
for $(s,t)\in\Delta^2_{S,T}$.

First, we verify the convergence hypothesis (\ref{eq:convergence}).
  Let $\Pi_N:=\{s=t^N_0,\dots,t^N_{k(N)}=t\}$ be a sequence of partitions of $[s,t] \subseteq [S, T]$ with $\lim_{N\to\infty}\Di{\Pi_N}\to0$.
  Clearly, for any $(u, v)\in \Delta^2_{S,T}$, $A_{u, v} = \E^u[\A_v-\A_u]$ so $R^N_{i+1} := \A_{t_{i+1}^N} - \A_{t_i^N} - A_{t_i^N, t_{i+1}^N}$ is a $(\F_{t_{i+1}^N})_i$-martingale difference sequence. Therefore, using the Burkholder--Davis--Gundy inequality (which is an equality in this case), and bounding $R^N_{i+1} $ using the $L^{\infty}$ norm of $f$, we obtain 
  \begin{align*}
   \Big \| \sum_{i=0}^{k(N)-1}
   A_{t^N_i,t_{i+1}^N} - \A_T + \A_S\Big \|_{L_2(\Omega)}^2 &= \Bigl\| \sum_{i=0}^{k(N)-1}R_{i+1}^N \Bigr\|_{L_2(\Omega)}^2\\
  & = \sum_{i=0}^{k(N)-1}\|R_{i+1}^N \|_{L_2(\Omega)}^2\\
  & \le C \|f\|_{L_\infty([0,T]\times\R^d)}^2\sum_{i=0}^{k(N)-1}(t_{i+1}^N-t_i^N)^2\\
  & \le C \|f\|_{L_\infty([0,T]\times\R^d)}^2 (T-S) \Di{\Pi_N},
  \end{align*}
  which goes to $0$ as $|\Pi_N|\to0$. Hence, the condition~\eqref{eq:convergence} holds.

  Next, we verify the assumption (a) of Proposition~\ref{prop:simple_sewing}.
  For $(s,u,t)\in\Delta^3_{S,T}$, we have 
  \begin{equation*}
  \delta A_{s, u, t}= A_{s,t}-A_{s, u} - A_{u, t} = \E^s[\A_t-\A_u] - \E^u[\A_t-\A_u].
  \end{equation*}
  Therefore, $\E^s\delta A_{s, u, t}=0$ and condition (a) holds.

  Finally, let us check the assumption (b) of Proposition~\ref{prop:simple_sewing}. 
  We recall the inequality~\eqref{eq:hyp_smoothing} and we derive 
    \begin{align*}
    &\| A_{s, t} \|_{L_\rho(\Omega)}\\
     &\quad\le\int_s^t \|\E^s f_r(Y_{S_0,r})\|_{L_\rho(\Omega)}\diff r\\
     &\quad\le C \int_s^t \| f_r \|_{\DD_\rho^{\beta}} \Bigl ((s-S_0)^{-\frac{Hd}{\rho}}(r-s)^{\beta H}+\1_{H<\frac{1}{2}}(s-S_0)^{-\frac{d}{2 \rho}}(r-s)^{\frac{d}{2 \rho}-\frac{Hd}{\rho} +\beta H} \Bigr )\diff r\\
    &\quad\le C\| f \|_{L_q([s, t], \DD_\rho^{\beta}(\R^d))} (S-S_0)^{-\frac{Hd}{\rho}}(t-s)^{1-\frac{1}{q}+ \beta H}\\
    &\qquad+\1_{H<\frac{1}{2}}C\| f \|_{L_q([s, t], \DD_\rho^{\beta}(\R^d))}(S-S_0)^{-\frac{d}{2 \rho}}(t-s)^{1-\frac{1}{q} +\frac{d}{2 \rho}-\frac{Hd}{\rho} + \beta H},
    \end{align*}
where $C=C(H,d,\beta,\rho)$ and in the last inequality we use the H\"older inequality, which is applicable since $\beta H>-1+1/q$ by~\eqref{eq:fsthyplemma}. 
  The function $(s, t) \mapsto \| f \|_{L_q([s, t], \DD_\rho^{\beta}(\R^d))}^q$ is a control and so is $(s, t) \mapsto t-s$. Therefore, by \cite[Exercise~1.10]{FV10} the function
  \[
  w(s, t):= \| f \|_{L_q([s, t], \DD_\rho^{\beta}(\R^d))}^{\frac{1}{1+\beta H}} (t-s)^{\frac{1-\frac1q+\beta H}{1+\beta H}}
  \]
  is a control. Hence, we get  
  \[
  \| A_{s, t} \|_{L_\rho(\Omega)}\le  C w(s,t)^{1+\beta H} \bigl((S-S_0)^{-\frac{Hd}{\rho}}+\1_{H<\frac{1}{2}}(S-S_0)^{-\frac{d}{2 \rho}}(T-S)^{\frac{d}{2 \rho}-\frac{Hd}{\rho}}\bigr).
  \]
  By~\eqref{eq:fsthyplemma}, $ 1 +\beta H > \frac{1}{2}$. Therefore, the assumption (b) of Proposition~\ref{prop:simple_sewing} holds. 
  
  We also see that $\rho\ge2$ by assumption. Thus, all the conditions of Proposition~\ref{prop:simple_sewing} are satisfied, and we obtain for some $C=C(H,d,\beta,\rho)$ that
  \begin{align*}
    \| \A_T - \A_{S} \|_{L_\rho(\Omega)}& \le C\| f \|_{L_q([S_0, T], \DD^{\beta}_\rho (\R^d))} (S-S_0)^{-\frac{Hd}{\rho}}(T-S)^{1-\frac{1}{q} + \beta H}\\
    &\quad+\1_{H<\frac{1}{2}}C\| f \|_{L_q([S_0, T], \DD^{\beta}_\rho (\R^d))}( S-S_0)^{-\frac{d}{2 \rho}}(T-S)^{1-\frac{1}{q} +\frac{d}{2 \rho}-\frac{Hd}{\rho}+\beta H}.
  \end{align*}
	
Recall that $(S,T)$ were arbitrary elements of $\Delta^2_{S_0,1}$, $S\neq S_0$.  Thanks to~\eqref{eq:fsthyplemma}, we have $1-\frac{1}{q} - \frac{Hd}{\rho} + \beta H > 0$. Therefore, we can apply the taming singularities lemma (Proposition~\ref{prop:taming_singularities}) to obtain that for all $S_0 \le T\le 1$,
  \begin{equation*}
    \Big \| \int_{S_0}^T f_r(Y_{S_0, r}) \diff r \Big \|_{L_\rho (\Omega)} = \| \A_T-\A_{S_0} \|_{L_\rho (\Omega)} \le C\| f \|_{L_q([S_0, T], \DD^{\beta}_\rho (\R^d))} (T-S_0)^{1-\frac{1}{q}-\frac{Hd}{\rho}+ \beta H},
  \end{equation*}
  which is the desired bound~\eqref{bstep01}.
\end{proof}

Using the above result, it is easy to bound the difference of two additive functionals. 

\begin{corollary}
	\label{cor:tr_mmnts}
Let $\lambda\in[0,1]$.   Let  $f: [0, 1] \times \R^d \to \R^d$ be a bounded  function and suppose that $f\in L_q([0, 1], \DD_\rho^{\beta}(\R^d))$, where $\beta\le0$ and 
	\begin{equation}
	\label{hyp:tr_mmnts} q\in[1,\infty), \quad \rho \in [2,\infty),  \quad \frac{1}{q} + \frac{Hd}{\rho}- H(\beta-\lambda) < 1,  \quad\text{and}\quad\beta-\lambda\in(-\frac{1}{2H},0).
\end{equation}
Then, there exists a constant  $C = C(H, d,\beta, \rho, q,\lambda)$ such that for any  $(s,t)\in\Delta^2_{0,1}$, $\delta\in\R^d$,  
and any deterministic measurable function $x\colon[0,1]\to\R^d$ one has	
\begin{equation*}
	\Big \| \int_s^t f_r(Y_{s, r} + x_r+\delta)-f_r(Y_{s, r} + x_r)\diff r \Big \|_{L_2(\Omega)} \le C |\delta |^{\lambda}\| f \|_{L_q([s, t], \DD_{\rho}^{\beta}(\R^d))} (t-s)^{1-\frac{1}{q}-\frac{Hd}{\rho} +\beta H- \lambda H}.
\end{equation*}
\end{corollary}
\begin{proof}
Define $h_r:= f_r(\cdot + x_{r} + \delta) - f_r(\cdot + x_{r})$.
	We apply Lemma~\ref{lem:step01} to the function $h$ instead of $f$ and $\beta - \lambda$ instead of $\beta$. We see that the condition~\eqref{eq:fsthyplemma} holds thanks to~\eqref{hyp:tr_mmnts}.
	We obtain that there exists $C = C(H, d, \beta, \rho, q,\lambda)$ such that for every $(s,t)\in\Delta_{0,1}^2$,
	\begin{align*}
		&\Big \| \int_s^t f_r(Y_{s, r} + x_r+\delta) - f_r(Y_{s, r}+x_r) \diff r \Big \|_{L_\rho(\Omega)}\\
		&\qquad= \Big \| \int_s^t h_r(Y_{s, r}) \diff r \Big \|_{L_\rho(\Omega)}\\
		&\qquad\le C \| h \|_{L_q([s, t], \DD_{\rho}^{\beta -\lambda}(\R^d))} (t-s)^{1-\frac{1}{q}-\frac{Hd}{\rho} + \beta H- \lambda H}.
		\\&\qquad \le C|\delta |^{\lambda}\| f \|_{L_q([s, t], \DD_{\rho}^{\beta}(\R^d))} (t-s)^{1-\frac{1}{q}-\frac{Hd}{\rho} + \beta H- \lambda H},
	\end{align*} 
 where the last inequality follows from~\eqref{besovpr} and from $\|\cdot\|_{\B^{0}_\rho}\le C \|\cdot\|_{L_\rho(\R^d)}$ in the case $\beta = 0$.
  We stress that the constant does not depend on the deterministic process $x$. This implies the desired bound since $\rho\ge2$ by assumption.
\end{proof}

Our next step is to establish the same bound as in Lemma~\ref{lem:step01} but for arbitrarily large moments of the integral. This is achieved by using the John--Nirenberg inequality \cite[Theorem 2.3]{Le22}, \cite[Theorem 1.3]{Le22b}, see also  \cite[Proposition~3.2]{BDG24}.

\begin{lemma}
  \label{lem:JN_mom}
   Let  $m\ge1$, $f: [0, 1] \times \R^d \to \R^d$ be a bounded function, and suppose that $f\in L_q([0, 1], \DD_\rho^{\beta}(\R^d))$, where the parameters $q, \rho, H, d, \beta$ satisfy~\eqref{eq:fsthyplemma}. Then there exists a constant $C = C(H,d,\beta,\rho, q, m)$ such that for every adapted measurable process $\psi\colon [0,1]\times\Omega\to\R^d$ and any $(s,t)\in\Delta^2_{0,1}$,
  \begin{equation}\label{JN}
    \Big \| \int_s^t f_r(Y_{ r} + \psi_s)\diff r \Big \|_{L_m(\Omega)} \le C \| f \|_{L_q([s, t], \DD_\rho^{\beta}(\R^d))} (t-s)^{1-\frac{1}{q}-\frac{Hd}{\rho} + \beta H}.
  \end{equation}
\end{lemma}

\begin{proof}
Fix an adapted measurable process $\psi\colon [0,1]\times\Omega\to\R^d$, $(S,T)\in\Delta^2_{0,1}$, and define
\[
\A_t := \int_S^t f_r(Y_{ r} + \psi_S) \diff r,\quad t\in[S,T].
\]
Recall that for $0\le s\le r$ we have $Y_{r}=Y_{s,r}+\E^s Y_{r}$ and $Y_{s,r}$ is independent of $\F_s$ by Assumption~\ref{main_assumption}. Therefore, for any $(s,t)\in\Delta_{S,T}^2$ we can compute using Lemma~\ref{lem:freezing} the conditional expectation $\E^{s}|\A_{t}-\A_{s}|$,
\[
    \E^s|\A_t-\A_s| = \E^s\Big |\int_s^t f_r(Y_{r}+ \psi_S) \diff r \Big | = \E \Big| \int_s^t f_r(x_r + Y_{s, r}) \diff r \Big |\bigg\rvert_{x_r:=\E^s Y_{ r} + \psi_S}.
  \]
When computing the expectation in the last expression, the variable $x$ is considered deterministic and the resulting expectation is evaluated at $x_r:=\E^s Y_{ r} + \psi_S$. For such deterministic $x_{r}$, we apply the Lemma~\ref{lem:step01} to the function $h_{r} := f_{r}(x_{r} + \cdot)$ instead of $f$. Now using the fact that $\| \cdot  \|_{L_1(\Omega)} \le \| \cdot  \|_{L_\rho (\Omega)}$ and that $\|f\|_{L_q([S, T], \DD_\rho^{\beta}(\R^d))} = \|h\|_{L_q([S, T], \DD_\rho^{\beta}(\R^d))}$, we obtain that there exists $C=C(H, d, \beta,\rho, q)$ such that for all $(s,t)\in\Delta_{S,T}^2$ one has
  \[
    \E^s|\A_t-\A_s|  \le  C\| f \|_{L_q([S, T], \DD_\rho^{\beta}(\R^d))} (T-S)^{1-\frac{1}{q}-\frac{Hd}{\rho}+\beta H}.
  \]
We see that the process $\A$ is continuous because $f$ is bounded. Therefore, by John--Nirenberg's theorem (Proposition~\ref{prop:John-Nierenberg}), we get that for all $m \ge 1$ there exists $C = C(H, d, \beta, \rho, q, m)$ such that
  \[
    \Big \| \int_S^T f_r(Y_{ r} + \psi_S)\diff r \Big \|_{L_m(\Omega)} =\|\A_T-\A_S\|_{L_m(\Omega)}\le C \| f \|_{L_q([S, T], \DD_\rho^{\beta}(\R^d))} (T-S)^{1-\frac{1}{q}-\frac{Hd}{\rho}+ \beta H},
  \]
  which is~\eqref{JN}.
\end{proof}
Now, we are ready to establish a bound on all the moments of additive functionals of the noise $Y$ perturbed by a drift of bounded variation. We will employ the Rosenthal--Burkholder stochastic sewing lemma \cite[Theorem 3.6]{BLM23}. To verify the assumptions of the lemma, we will use Lemma~\ref{lem:step01}, Corollary~\ref{cor:tr_mmnts}, and Lemma~\ref{lem:JN_mom}. First, we assume additionally that $f$ is bounded and Lipschitz, and later we will remove these conditions. Compared with \cite[Lemma 4.6]{BLM23}, we need to handle the terms $|\E^u\delta A_{s,u,t}|$ and $\E^u|\delta A_{s,u,t}|^2$ more delicately here in order to obtain the optimal condition and break the subcritical barrier.

\begin{lemma}
  \label{lem:step02a}
Let $m \in [2, \infty)$. Let  $f: [0, 1] \times \R^d \to \R^d$ be a bounded function, and suppose that $f\in L_q([0, 1], L_p(\R^d))$, where $p, q \in [1, \infty)$ satisfy the main condition (\ref{eq:main_cond}). Assume further that $f\in L_\infty([0,1],\C^1(\R^d))$. 
  Then, there exists a constant $C=C(H,d, p, q,  m)$ such that for any adapted measurable process $\psi : [0, 1] \times\Omega\to \R^d$ with
\begin{equation}
    \label{fin1var} \big \| [\psi ]_{\C^{1-\var}([0, 1])} \big \|_{L_m(\Omega)} <  \infty,
  \end{equation}
and for any $(s,t)\in\Delta^2_{0,1}$, one has
  \begin{align}
    \label{3bound}& \Big \| \int_s^t f_r(Y_{r} + \psi_r)\diff r \Big \|_{L_m(\Omega)}\nn\\
     &\quad\le C \| f \|_{L_q([s, t], L_p(\R^d))}(t-s)^{1-\frac{1}{q}-\frac{Hd}{p}} \Bigl (1 + \big\|[\psi ]_{\C^{1-\var}([s, t])}\big \|_{L_m(\Omega)}^{1-\frac{1}{q}}(t-s)^{-H(1-\frac{1}{q})}\Bigr ).
  \end{align}
\end{lemma}

\begin{proof}
  For any $(s,t)\in\Delta^2_{0,1}$ and for any adapted measurable process $\psi : [0, 1] \times\Omega\to \R^d$ satisfying~\eqref{fin1var}, we define 
 \[
    A_{s, t} := \int_s^t f_r(Y_{r} + \psi_s) \diff r \quad \text{and} \quad \A_t = \int_0^t f_r(Y_{r} + \psi_r)\diff r,
  \]
 and we want to use the stochastic sewing lemma from Proposition~\ref{prop:rosenthal_sewing}.

  \textbf{Step 1.}
  We prove the convergence (\ref{eq:convergence}).
  Let $\Pi_N := \{S = s_0^N < \dots < s_{k(N)}^N = T\}$ be a sequence of partitions of $[S, T] \subseteq [0, 1]$ such that $\Di{\Pi_N} \to 0$ when $N \to \infty $.
Using the triangle inequality and the fact that $f$ is Lipschitz in space, we get
 \begin{align*}
\Bigl|\A_T-\A_S- \sum_{i=0}^{k(N)-1} A_{s^N_i,s_{i+1}^N}\Bigr| &\le \sum_{i=0}^{k(N)-1} \int_{s_i^N}^{s_{i+1}^N} \bigl |f_r(Y_{ r}+\psi_r)-f_r(Y_{ r}+\psi_{s_i^N})\bigr |\diff r.\\
&\le \|f\|_{L_\infty([0,1],\C^1(\R^d))} \sum_{i=0}^{k(N)-1} \int_{s_i^N}^{s_{i+1}^N} [\psi ]_{\C^{1-\var}([s_i^N, s_{i+1}^N])}\diff r\\
&    \le   \|f\|_{L_\infty([0,1],\C^1(\R^d))}[\psi ]_{\C^{1-\var}([S, T])} \Di{\Pi_N}.
\end{align*}
Since by~\eqref{fin1var}, $[\psi ]_{\C^{1-\var}([S, T])}<\infty$ almost surely,  we see that the left-hand side of the above inequality goes to $0$ almost surely (and thus in probability) as $N\to\infty$. This proves condition~\eqref{eq:convergence}.

  \textbf{Step 2.}
We verify the condition (a) of Proposition~\ref{prop:rosenthal_sewing}. Clearly, for $(s,u,t)\in\Delta^3_{0,1}$, we have
  \[
    \delta A_{s, u, t} = \int_u^t f_r(Y_{r}+\psi_s) - f_r(Y_{r}+\psi_u) \diff r.
  \]
 Recall that $Y_r=\E^u Y_r+ Y_{u,r}$. Applying the triangle inequality we obtain
\begin{equation*}
    |\E^u[\delta A_{s, u, t}]| 
    \le \int_u^t \big |\E^{u} \big [f_r(Y_{u, r}+(\E^u Y_{ r} + \psi_u)+(\psi_s-\psi_u)) -f_r(Y_{u, r} + (\E^u Y_{ r} + \psi_u))\big ]\big |\diff r
\end{equation*}
We see that for any $r \in [u, t]$ the random variables  $\E^u Y_{ r} + \psi_u$ and $\psi_s-\psi_u$ are $\F_u$-measurable, while $Y_{u,r}$ is independent of $\F_u$ by Assumption~\ref{main_assumption}. Hence, by applying Lemma~\ref{lem:freezing}, the conditional expectation with respect to $\F_{u}$ can be expressed by first ``freezing'' these random variables  and then taking the expectation, and we get
  \begin{align}\label{Ybound1}
    |\E^u[\delta A_{s, u, t}]| 
    &\le \int_u^t \big |\E \big [f_r(Y_{u, r}+x_r+\delta) -f_r(Y_{u, r} + x_r)\big ]\big |\diff r\bigg\rvert_{\substack{x_r:=\E^u Y_{ r} + \psi_u\\\delta:=\psi_s-\psi_u}} .
  \end{align}
Let 
\begin{equation}\label{lambdafirst}
\lambda\in(0,1]
\end{equation}
By~\eqref{eq:hyp_smoothing2}, we have 
\begin{equation*}
\big|\E \big [f_r(Y_{u, r}+x_r+\delta) -f_r(Y_{u, r} + x_r)\big ]\big |\le  C |\delta|^\lambda \|f_r\|_{L_p(\R^d)}(r-u)^{-\frac{Hd}{p}-\lambda H}
\end{equation*}
for $C=C(H,d,\lambda,p)$ not depending on $\psi$. Substituting this back into~\eqref{Ybound1} and applying the H\"older inequality, we get 
  \begin{align}\label{checking1}
    |\E^u[\delta A_{s, u, t}]|& \le C |\psi_s-\psi_u|^{\lambda} \int_u^t \| f_r \|_{L_p(\R^d)} (r-u)^{-\frac{Hd}{p}-\lambda H}\diff r
    \nn\\& \le C [\psi ]_{\C^{1-\var}([s, t])}^{\lambda}  \| f \|_{L_q([u, t], L_p(\R^d))}(t-u)^{1-\lambda -\frac{1}{q}}(t-u)^{\lambda -\frac{Hd}{p}-\lambda H},
  \end{align}
provided that 
\begin{equation}\label{lambdasecond}
1-\frac{1}{q}-\frac{Hd}{p}-\lambda H>0.
\end{equation}

Since $\| f \|^q_{L_q([s, t], L_p(\R^d))}$, $[\psi ]_{\C^{1-\var}([s, t])}$ and $(t-s)$ are three controls, if $1- \lambda -\frac{1}{q} \ge 0$, by \cite[Exercise~1.10]{FV10}, we have that
\begin{equation}\label{claim1}
[\psi ]_{\C^{1-\var}([s, t])}^{\lambda}  \| f \|_{L_q([u, t], L_p(\R^d))}(t-u)^{1-\lambda -\frac{1}{q}}\, \text{is a random control.} 
\end{equation}

To verify  condition (a) of Proposition~\ref{prop:rosenthal_sewing} it remains to check that this power $ \lambda -\frac{Hd}{p}-\lambda H$ is larger than $0$.
We combine the two restrictions on $\lambda$, \eqref{lambdafirst} and \eqref{lambdasecond}, and choose therefore\footnote{Actually, the optimal choice here would be  $\lambda:=(\frac{1-\frac{1}{q}-\frac{Hd}{p}-\eps}H)\wedge1$ for some small $\eps>0$. However, this choice would lead to exactly the same condition \eqref{eq:main_cond}.}
\begin{equation}\label{lambdaopt}
\lambda=1-\frac1q.
\end{equation}

Using \eqref{eq:main_cond}, we see that with such choice of $\lambda$ we have
\begin{equation*}
\lambda -\frac{Hd}{p}-\lambda H>0.
\end{equation*}	
Recalling now \eqref{claim1}, we see that condition (a) of Proposition~\ref{prop:rosenthal_sewing} is satisfied.

  \textbf{Step 3.}
  We verify condition (b) of Proposition~\ref{prop:rosenthal_sewing}. Let $(s,u,t)\in\Delta^3_{0,1}$. Arguing as above with Lemma~\ref{lem:freezing}, using Assumption~\ref{main_assumption} we get
  \begin{align*}
    \E^u[|\delta A_{s, u, t}|^2] =\E\Bigl[ \int_u^t f_r(Y_{u, r}+x_r+\delta) -f_r(Y_{u, r} + x_r)\diff r\Bigr]^2 \bigg\rvert_{\substack{x_r:=\E^u Y_{ r} + \psi_u\\\delta:=\psi_s-\psi_u}}.
  \end{align*}
  Define
\begin{equation*}
    \rho := 2\vee p, \quad \quad \beta := \frac{d}{\rho}-\frac{d}{p}\le 0, \quad \quad \lambda := \frac{1}{2}\Bigl(1 -\frac{1}{q}\Bigr).
  \end{equation*}
  We verify the hypothesis of Corollary~\ref{cor:tr_mmnts} with these values of $\rho$, $\beta $ and $\lambda $.
  By the Besov embeddings, $f\in L_q([0, 1], \DD_{\rho}^{\beta}(\R^d))$. We also have $\rho \ge 2$. Furthermore, using~\eqref{eq:main_cond} we get
  \[
    \frac{1}{q} + \frac{Hd}{\rho}+ H(2\lambda - \beta) = \frac{1-H}{q} + H + \frac{Hd}{p}< 1.
  \]
Since $\lambda \le 2 \lambda $, the third part of condition (\ref{hyp:tr_mmnts}) is satisfied. 
Finally, we note that by definition $\beta = \frac{d}{p}(\frac{p}{\rho} - 1)$ and $\rho \le 2p$ so $\beta \ge -\frac{d}{2p}$.
  Using this fact and (\ref{eq:main_cond}) we get
  \[
    2H(\lambda - \beta)=H-\frac{H}q-2H\beta \le H - \frac{H}{q} + \frac{Hd}{p} < 1 -\frac{1}{q} < 1
  \]
  so $\beta-\lambda>-1/(2H)$ and the last part of~\eqref{hyp:tr_mmnts} is satisfied.
  
  Therefore, applying Corollary~\ref{cor:tr_mmnts} and using the Besov embedding $L_p(\R^d) \subseteq \DD_{\rho}^{\beta}(\R^d)$, we get that there exists $C = C(H, d, p, q)$ not depending on $\psi$ such that
 \begin{align*}
 \E^u [|\delta A_{s, u, t}|^2]^{\frac{1}{2}} &\le C \| f \|_{L_q([s, t], L_p(\R^d))}|\psi_s - \psi_u|^{\lambda} (t-s)^{1-\frac{1}{q}-\frac{Hd}{p}-\lambda H}\\
& \le C \| f \|_{L_q([s, t], L_p(\R^d))} [\psi ]_{\C^{1-\var}([s, t])}^{\lambda}  (t-s)^{1-\frac{1}{q}-\frac{Hd}{p}-\lambda H}.
 \end{align*}
The power $1 - \frac{1}{q} - \frac{Hd}{p} - \lambda H$ is positive by the third part of~\eqref{hyp:tr_mmnts} that we just proved. As before, the term $\| f \|_{L_q([s, t], L_p(\R^d))} [\psi ]_{\C^{1-\var}([s, t])}^{\lambda}$ is a random control to the power $\frac{1}{q} + \lambda = \frac{1}{2} + \frac{1}{2q} \ge \frac{1}{2}$. Therefore, it is also a control to the power $\frac{1}{2}$, and the condition (b) of Proposition~\ref{prop:rosenthal_sewing} is satisfied.

  \textbf{Step 4.}
  We verify the condition (c) of Proposition~\ref{prop:rosenthal_sewing}.
  We take 
  \[
    \rho := 2\vee p, \quad \quad  \beta := \frac{d}{\rho}-\frac{d}{p},
  \]
  and we verify all the conditions of Lemma~\ref{lem:JN_mom}. Clearly, $\rho \ge 2$.
 Assumption~\eqref{eq:main_cond} yields
  \[
    \frac{1}{q}+\frac{Hd}{\rho} -\beta H= \frac{1}{q} + \frac{Hd}{p} < 1 -H + \frac{H}{q} < 1,
  \]
  so the third part of (\ref{eq:fsthyplemma}) is satisfied.
  Finally,  $\beta \ge -\frac{d}{2p}$ and because $\frac{Hd}{p} < 1$ by (\ref{eq:main_cond}) the last part of~\eqref{eq:fsthyplemma} is satisfied. 
  
  Thus, all the conditions of Lemma~\ref{lem:JN_mom} hold. Therefore, we deduce that for all integers $n \ge 1$ there exists a constant $C = C(H, d, p, q, n)$ that does not depend on $\psi$ such that for all $(s,t)\in\Delta_{0,1}$,
  \[
  \| A_{s, t} \|_{L_n(\Omega)}=\Bigl\| \int_s^t f_r(Y_{r} + \psi_s) \diff r\Bigr\|_{L_n(\Omega)} \le C \| f \|_{L_q([s, t], L_p(\R^d))} (t-s)^{1-\frac{1}{q}-\frac{Hd}{p}},
  \]
  where we also used the Besov embedding $\| \cdot \|_{\DD_\rho^{\beta}(\R^d)} \le \| \cdot \|_{L_p(\R^d)} $ (recall the convention~\eqref{Dspace}). Thus, the condition (c) of Proposition~\ref{prop:rosenthal_sewing}
 hold.
 
  \textbf{Step 5.}
  Thus, all the assumptions of Proposition~\ref{prop:rosenthal_sewing} are satisfied. Therefore, there exists a constant $C = C(H, d, p, q, m)$  such that for any  $(s,t)\in\Delta_{0,1}^{2}$,
  \begin{align*}
    &\Big \| \int_s^t f_r(Y_{ r} + \psi_r)\diff r \Big \|_{L_m(\Omega)}\\
    &\,\, \le C \| f \|_{L_q([s, t], L_p(\R^d))}(t-s)^{1-\frac{1}{q}-\frac{Hd}{p}}
    \\ &\,\, \quad + C \| f \|_{L_q([s, t], L_p(\R^d))} \big\|[\psi ]^{1-\frac{1}{q}}_{\C^{1-\var}([s, t])}\big \|_{L_m(\Omega)}(t-s)^{1-H-\frac{1-H}{q}-\frac{Hd}{p}}
    \\ &\,\, \quad + C \| f \|_{L_q([s, t], L_p(\R^d))} \big\|[\psi ]^{\frac{1}{2}(1-\frac{1}{q})}_{\C^{1-\var}([s, t])}\big \|_{L_m(\Omega)}(t-s)^{1-\frac{H}{2}-\frac{2-H}{2q}-\frac{Hd}{p}}\\
&\,\, \le C\| f \|_{L_q([s, t], L_p(\R^d))}(t-s)^{1-\frac{1}{q}-\frac{Hd}{p}} \Bigl (1 + \frac{\big\|[\psi ]_{\C^{1-\var}([s, t])}\big \|^{1-\frac{1}{q}}_{L_m(\Omega)}}{(t-s)^{H(1-\frac{1}{q})}} + \frac{\big\|[\psi ]_{\C^{1-\var}([s, t])}\big \|^{\frac{1}{2}(1-\frac{1}{q})}_{L_m(\Omega)}}{(t-s)^{\frac{H}{2}(1-\frac{1}{q})}}\Bigr ).
  \end{align*}
Using inequality $1 + x + \sqrt{x} \le 2(1 + x)$ valid for all $x\ge0$, we get the desired bound~\eqref{3bound}. We stress again that the constant $C$ does not depend on $\psi$.
\end{proof}

\begin{remark}
Bound \eqref{checking1} generalizes \cite[inequality (4.16)]{BLM23} by introducing a parameter $\lambda\in[0,1]$. The latter inequality corresponds to $\lambda = 1$. It is easy to see that in the current setting, the choice $\lambda=1$ leads to a suboptimal condition (recall \eqref{lambdasecond})
\begin{equation}\label{subopt}
1-\frac{1}{q}-\frac{Hd}{p}-H>0,
\end{equation}
while optimizing over $\lambda\in[0,1]$ yields condition \eqref{eq:main_cond}, which is sharp.
\end{remark}	

Next, let us remove the restriction in Lemma~\ref{lem:step02a} that $f$ is a smooth function. 

\begin{corollary}
  \label{cor:onlymeasurable}
Let $m\ge2$. Let  $f\in L_q([0, 1], L_p(\R^d))$, where  $p, q \in [1, \infty)$ satisfy~\eqref{eq:main_cond}. 
Then there exists a constant $C=C(H,d, p, q,  m)$ such that  for any  $(s,t)\in\Delta^2_{0,1}$ and for any adapted measurable process $\psi : [0, 1] \times\Omega\to \R^d$ satisfying~\eqref{fin1var}, the bound~\eqref{3bound} holds.
\end{corollary}
The proof of this result is exactly the same as \cite[Lemma 4.6, steps 2 to 4]{BLM23}.

Finally, we are ready to provide a priori bounds to solutions of the equation 
 \begin{equation}
 	\label{SDEfunc}
 	\Gtag{x;f;Y} \psi_t = \int_0^t f_r(x + \psi_r + Y_{r})\diff r, \quad t\in [0, 1],\,\,x\in\R^d,
\end{equation}
  where the integral is understood as a Lebesgue integral. This equation is closely related to our main SDEs~\eqref{eq:main_eq} and~\eqref{eq:main_eq_levy}, by taking there $Y=W^H$ or $Y=L$ and $X=Y+\psi+x$. We say that $\psi$ solves \Gref{x;f;Y} if $\P\bigl(\text{\Gref{x;f;Y} holds for all $t\in[0,1]$}\bigr)=1$.
  
\begin{corollary}
  \label{cor:step02b}
Let $m\ge2$, $x\in\R^d$. Let  $f\in L_q([0, 1], L_p(\R^d))$, where  $p, q \in [1, \infty)$ satisfy the main condition (\ref{eq:main_cond}).
  Let $\psi : [0, 1] \to \R^d$ be a solution to \Gref{x;f;Y} adapted to the filtration $(\F_t)$ and such that $\|   [\psi ]_{\C^{1-\var}([0, 1])} \|_{L_m(\Omega)} < \infty$.

  Then there exists a constant $C=C(H,d, p, q,  m)$ such that for any $(s,t)\in\Delta^2_{0,1}$ one has
  \begin{equation}\label{c313}
    \bigl \| [\psi ]_{\C^{1-\var}([s, t])} \bigr \|_{L_m(\Omega)} \le C \bigl (\| f \|_{L_q([s, t], L_p(\R^d))} + \| f \|_{L_q([s, t], L_p(\R^d))}^q\bigr )(t-s)^{1-H-\frac{Hd}{p}-\frac{1-H}{q}}.
  \end{equation}
Furthermore, there exist $\gamma >0$ and $C = C(H, d, p, q, m)>0$ such that
  \begin{equation}
    \label{milshef} \P (\|\psi\|_{\C^\gamma([0,1])} \ge r) \le C\bigl (1+ \| f \|_{L_q([s, t], L_p(\R^d))}^q\bigr )^m r^{-m}, \quad r>0.
\end{equation}
\end{corollary}

\begin{remark}
We note that the power of the factor $(t-s)$ in the right-hand side of~\eqref{c313} is positive by condition~\eqref{eq:main_cond}. 
\end{remark}
\begin{proof}
	Fix $m\ge2$. 
The hypothesis allow us to apply Corollary~\ref{cor:onlymeasurable} to the function $|f|$. We obtain
  \begin{align}\label{itself}
     &\big\|[\psi ]_{\C^{1-\var}([s, t])}\big \|_{L_m(\Omega)}\nn\\
    &\,\,=\Bigl\|\int_s^t |f_r|(Y_{r} + \psi_r+x) \diff r\Bigr\|_{L_m(\Omega)}\nn\\
    &\,\, \le C \| f \|_{L_q([s, t], L_p(\R^d))}(t-s)^{1-\frac{1}{q}-\frac{Hd}{p}} \Bigl (1 + \big\|[\psi ]_{\C^{1-\var}([s, t])}\big \|_{L_m(\Omega)}^{1-\frac{1}{q}}(t-s)^{-H(1-\frac{1}{q})}\Bigr )\\
    &\,\,\le C \| f \|_{L_q([s, t], L_p(\R^d))} \Bigl (1 + \bigl \| [\psi ]_{\C^{1-\var}([s, t])} \bigr \|^{1-\frac{1}{q}}_{L_m(\Omega)} \Bigr),\nn
  \end{align}
  where $C=C(H,d,p,q,m)$ and in the last inequality we used that $1-\frac{1}{q}-\frac{Hd}{p}-H(1-\frac{1}{q})>0$ thanks to~\eqref{eq:main_cond}.
 By Lemma~\ref{lem:sqrbound}, this implies 
  \[
    \bigl \| [\psi ]_{\C^{1-\var}([s, t])} \bigr  \|_{L_m(\Omega)} \le C \bigl (\| f \|_{L_q([s, t], L_p(\R^d))}+ \| f \|_{L_q([s, t], L_p(\R^d))}^q \bigr ) \le C \bigl (1+ \| f \|_{L_q([s, t], L_p(\R^d))}^q \bigr )
  \]
  for a constant $C=C(H, d, p, q, m)$.
  Reinserting this bound into~\eqref{itself}, we deduce that there exists a constant $C=C(H,d,p,q,m)$ such that for all $(s,t)\in\Delta^2_{0,1}$, one has 
  \[
     \bigl \| [\psi ]_{\C^{1-\var}([s, t])} \bigr \|_{L_m(\Omega)} \le C \bigl (\| f \|_{L_q([s, t], L_p(\R^d))} + \| f \|_{L_q([s, t], L_p(\R^d))}^q\bigr ) (t-s)^{1-H-\frac{Hd}{p}-\frac{1-H}{q}},
  \]
  which is~\eqref{c313}.

Next, note that for any $m\ge1$ and $(s,t)\in\Delta^2_{0,1}$ we have 
\begin{equation*}
\| \psi_t-\psi_s\|_{L_m(\Omega)} \le 
\bigl \| [\psi ]_{\C^{1-\var}([s, t])} \bigr \|_{L_m(\Omega)} \le C \bigl (1+ \| f \|_{L_q([s, t], L_p(\R^d))}^q\bigr ) (t-s)^{1-H-\frac{Hd}{p}-\frac{1-H}{q}},
\end{equation*}
for $C=C(H, d, p, q, m)$. Therefore, by applying the quantitative version of Kolmogorov's continuity theorem  \cite[Theorem~A.11]{FV10} and the Chebyshev inequality, we get~\eqref{milshef}  with $\gamma=\frac12(1 - H -\frac{Hd}{p}-\frac{1-H}{q}) > 0$.
\end{proof}

Now, equipped with the crucial integral bound \eqref{3bound} and the a priori bound \eqref{c313}, we proceed to the final goal of this subsection: establishing stability results for the solutions of the SDE \Gref{x;f;Y}.

\begin{lemma}
\label{lem:drift_convergence}
Let $m \ge 2$ and $f \in L_{q}([0, 1], L_{p}(\R ^{d}))$ where $p, q \in [1, \infty)$ satisfy~\eqref{eq:main_cond}.  
Let $(x^n)_{n\in\N}$ be a sequence in $\R^d$ converging to $x$. 
Let $Y^n$ be a random element which has the same law as $Y$ and which is adapted to the filtration $\FF^n:=(\F^n_t)_{t\in[0,1]}$, and assume that $(Y^{n}, \FF^{n})$ satisfy Assumption~\ref{main_assumption}. 
Let $\psi$ and $\psi ^{n},\ n \in \N$ be measurable processes in $[0, 1] \times \Omega \to \R ^{d}$ adapted to $\FF$ and $\FF^{n}$ respectively and such that for some $C > 0$ we have for any $n \in \N$, 
\begin{equation}
\label{eq:unif_psin_psi}
  \| [\psi ^{n}]_{\C^{1-\var}([0, 1])}\|_{L_{m}(\Omega)} + \| [\psi]_{\C^{1-\var}([0, 1])}\|_{L_{m}(\Omega)} \le  C.
\end{equation}
Assume finally that for any $t \in [0, 1]$, 
\begin{equation}
 \lim_{n\to\infty }\psi_t^n = \psi_t\ \text{a.s.}\quad\text{and}\quad  
\lim_{n\to\infty }Y_t^n = Y_t\ \text{a.s.}
\end{equation}
Then for any $(s, t) \in \Delta ^{2}_{0, 1}$,
\begin{equation}\label{limstab}
 \lim_{n \to \infty } \Big \| \int_{s}^{t} f(x^{n} + \psi^{n}_{r} + Y^{n}_{r}) - f(x + \psi_{r} + Y_{r})\diff r \Big \|_{L_{m}(\Omega)} = 0.
\end{equation}
\end{lemma}

Note that if $f$ is continuous, the statement is obvious.  Lemma~\ref{lem:drift_convergence} shows that the regularization-by-noise effect of $Y$ is strong enough for the convergence \eqref{limstab} to occur even when $f$ is discontinuous.

\begin{proof}
Let $\varepsilon >0$ and let $g$ be a smooth and bounded function such that $ \| f - g  \|_{L_{q}([0, 1], L_{p}(\R ^{d}))} \le \varepsilon$. Define
\begin{equation}
 X := x + \psi + Y, \quad X^{n} := x^n + \psi ^{n} + Y^{n}, \quad n \in \N.
\end{equation}
Let $(s, t) \in \Delta_{0,1}^{2}$. We use the decomposition
\[
  \int_{s}^{t} |f(X^{n}_{r})-  f(X_{r})|\diff r \le 
\int_{s}^{t} |f-g|(X^{n}_{r})\diff r + \int_{s}^{t} |g(X^{n}_{r}) - g(X_{r})|\diff r + \int_{s}^{t} |f-g|(X_{r})\diff r.
\]
By Corollary~\ref{cor:onlymeasurable} and using \eqref{eq:unif_psin_psi}, we can bound the first and the last term with
\[
 \Big \| \int_{s}^{t} |f-g|(X^{n}_{r})\diff r \Big \|_{L_{m}(\Omega)} + \Big \| \int_{s}^{t} |f-g|(X_{r})\diff r \Big \|_{L_{m}(\Omega)}\le C \| f - g  \|_{L_{q}([0, 1], L_{p}(\R ^{d}))} \le C \varepsilon,
\]
where the constant does not depend on $n$.
By the hypothesis on $(x^{n})_{n}, (\psi^{n})_{n}$ and $(Y^{n})_{n}$, and $g$ being continuous, for any $r \in [0, 1]$,  $\lim_{n \to  \infty} |g(X^{n}_r) - g(X_r)| = 0$ almost surely. By the dominated convergence theorem, which is applicable because $g$ is bounded, we obtain
\[
  \lim_{n \to \infty }\Big \| \int_{s}^{t} |g(X^{n}_{r}) - g(X_{r})|\diff r  \Big \|_{L_{m}(\Omega)} \le  \lim_{n \to \infty}\int_{s}^{t}\| g(X^{n}_{r}) - g(X_{r}) \|_{L_{m}(\Omega)}\diff r = 0
\]
for the second term. Combining the two previous bounds, we get 
\[
\limsup_{n \to \infty}\Big \|  \int_{s}^{t} |f(X^{n}_{r})-  f(X_{r})|\diff r \Big \| \le C \varepsilon.
\]
Finally by taking $\varepsilon$ to $0$, we obtain the desired result.
\end{proof}

We use Lemma~\ref{lem:drift_convergence} to show that a limit of solutions to \Gref{x^n;b^n;Y^n} is a solution of the limiting equation. Again, if the drift were Hölder continuous, this would be obvious; here, however, we must rely on a priori bounds and the estimate \eqref{3bound}.

\begin{lemma}\label{lem:stability}
Assume that the parameters $p,q\in[1,\infty)$ satisfy~\eqref{eq:main_cond}. 	
Let $(b^{n})_{n\in\N}$ be a sequence of functions in $L_{q}([0, 1], L_{p}(\R ^{d}))$ converging to $b$ in $L_{q}([0, 1], L_{p}(\R ^{d}))$. 
Let $(x^n)_{n\in\N}$ be a sequence in $\R^d$ converging to $x$. 

Let $Y^n$ be a random element which has the same law as $Y$ and which is adapted to the filtration $\FF^n:=(\F^n_t)_{t\in[0,1]}$, and assume that $(Y^{n}, \FF^{n})$ satisfy Assumption~\ref{main_assumption}. 
Assume that $\psi^n$ is also adapted to  $\FF^n$ and solves \Gref{x^n;b^n;Y^n}. Moreover, suppose that for any $m\ge1$, $n\in\N$ we have
\begin{equation}\label{psivarnu}
  \| [\psi ^{n}]_{\C^{1-\var}([0, 1])}\|_{L_{m}(\Omega)}< \infty
\end{equation}
Suppose that there exists a measurable function $\psi\colon[0,1]\times\Omega\to\R^d$ adapted to the same filtration $\FF:=(\F_t)_{t\in[0,1]}$ as in Assumption~\ref{main_assumption} and such that for any $t \in [0, 1]$,
\begin{equation}\label{psiycond}
 \lim_{n\to\infty }\psi_t^n = \psi_t\ \text{a.s.}\quad\text{and}\quad  
\lim_{n\to\infty }Y_t^n = Y_t\ \text{a.s.}
\end{equation}
Then there exists a modification of $\psi$ which has almost surely continuous sample paths. Furthermore, any modification of $\psi$ which has almost surely continuous sample paths solves \Gref{x;b;Y} almost surely.
\end{lemma}
\begin{remark}Note that the bound \eqref{psivarnu} is not assumed to be uniform in $n$, and is therefore trivially satisfied if all the drifts $b^n$ are bounded.
\end{remark}

\begin{proof}[Proof of Lemma~\ref{lem:stability}]
Because $b^{n}$ converges to $b$, there exists $n_{0} \in \N$ such that for all $n \ge n_{0}$, 
\begin{equation*}
\| b^{n}\|_{L_{q}([0, 1], L_{p}(\R ^{d}))} \le \| b\|_{L_{q}([0, 1], L_{p}(\R ^{d}))} + 1 < +\infty.
\end{equation*}
Using this bound uniform in $n$ we now observe that by Corollary~\ref{cor:step02b} there exists a constant $\gamma >0$ such for any $m \ge 1$
\begin{equation}
	\label{1var_stabilityn}
	\| [\psi ^{n}]_{\C^{1-\var}([s, t])} \|_{L_{m}(\Omega)} \le C(t-s)^{\gamma},\quad (s, t) \in \Delta^{2}_{0, 1},\ n \ge n_{0},
\end{equation}
where $C = C(H, d, p, q, m) >0$.
By applying Lemma~\ref{lem:limit1var} with $m$ sufficiently large so that $\gamma > 1/m$, there exists a continuous modification of $\psi$ denoted further by $\wt\psi $. By Lemma~\ref{lem:limit1var}, it satisfies $ \lim_{n \to \infty} \psi^{n}_{t} = \wt \psi _{t}$ almost surely for any $t \in [0, 1]$, and moreover
\begin{equation}
	\label{1var_stability}
	\| [\wt \psi]_{\C^{1-\var}([s, t])} \|_{L_{m}(\Omega)} \le C(t-s)^{\gamma},\quad (s, t) \in \Delta^{2}_{0, 1}.
\end{equation}

%


Define for $n\in\N$
\begin{equation}\label{Xdef}
  X^{n} := x^n + \psi ^{n} + Y^{n}, \quad X := x + \wt \psi + Y.
\end{equation}
We want to show that $\wt \psi$ solves \Gref{x;b;Y}. First, let us verify that the integral in \Gref{x;b;Y} is well defined  by showing that $\int_{0}^{1} |b_{r}|(X_{r}) \diff r$ is finite almost surely. We already know, using \eqref{1var_stabilityn} and $\psi ^{n}$ solving \Gref{x^n;b^n;Y^n}, that $\int_{0}^{1} |b_{r}^{n}|(X^{n}_{r}) \diff r= [\psi ^{n}]_{\C^{1-\var}([0, 1])}$ is finite almost surely. We therefore bound the difference
\begin{align*}
  \Big  \| \int_{0}^{1} (|b_{r}^{n}|&(X^{n}_{r}) - |b_{r}|(X_{r})) \diff r  \Big  \|_{L_{2}(\Omega)} 
\\ &\le \Big  \| \int_{0}^{1} \big | |b_{r}^{n}| - |b_{r}| \big | (X^{n}_{r})\diff r  \Big  \|_{L_{2}(\Omega)}  
+ 
  \Big  \| \int_{0}^{1} |b_{r}|(X^{n}_{r}) - |b_{r}|(X_{r}) \diff r  \Big  \|_{L_{2}(\Omega)} 
\end{align*} 
To control the first term, we apply Corollary~\ref{cor:onlymeasurable} to the function $\big | |b_{r}^{n}| - |b_{r}| \big |$, recalling the bound \eqref{1var_stabilityn}. For the second term, we know that $X^{n}$ converges almost surely to $X$ because $\psi ^{n}$ converges to $\wt \psi$ almost surely and the hypothesis of the statement. We can therefore apply Lemma~\ref{lem:drift_convergence}. Combining the two bounds and using the convergence of $b^{n}$ to $b$, we obtain
\[
 \lim_{n \to \infty} \Big  \| \int_{0}^{1} |b_{r}^{n}|(X^{n}_{r}) - |b_{r}|(X_{r}) \diff r  \Big  \|_{L_{2}(\Omega)} = 0.
\]
By \eqref{1var_stabilityn}, this shows that 
\[
  \Big  \| \int_{0}^{1} |b_{r}|(X_{r}) \diff r  \Big  \|_{L_{2}(\Omega)} < +\infty,
\]
which in turns proves that almost surely, $\int_{0}^{\cdot}  b_r(x+\wt \psi_r+Y_r) \diff r$ is well defined and continuous. 

Now we verify that \Gref{x;b;Y} holds. Let $t \in [0, 1]$ and $n \ge n_{0}$ in $\N$. Consider the following decomposition:
\begin{align}\label{Aall}
	  \Big |  \wt \psi_t - \int_0^t b_r(X_r) \diff r \Big |& 
\le \Big | \wt \psi_t - \int_0^t b^n_r(X^n_r) \diff r \Big | 
+    \int_0^t |b^n_r - b_r|(X^n_r) \diff r \nn
\\& \quad + \int_0^t |b_r( X^n_r) - b_r(X_r)| \diff r
\end{align}
Let us bound all the terms in the right-hand side of the above inequality. 
By the assumptions of the lemma, $\psi^{n}_{t} = \int_{0}^{t} b_{r}^{n}(X_r^{n})\diff r$, and $\psi^{n}_{t}$ converges to $\wt \psi_{t}$ almost surely. Therefore,
\begin{equation}\label{A1}
\Big | \wt \psi_t - \int_0^t b^n_r(X^n_r) \diff r \Big | \quad \text{converges in probability to $0$ as $n \to \infty$}.
\end{equation}
For the second term, applying Corollary~\ref{cor:onlymeasurable} to the function $|b^n-b|$ and using the bound~\eqref{1var_stabilityn} for $m=2$, we get 
\begin{equation}\label{A2}
\Big \| \int_0^t |b^n_r - b_r|(X^{n}_{r}) \diff r \Big \|_{L_{2}(\Omega )}\le C \| b^n-b \|_{L_q([0, 1], L_p(\R^d))}
\end{equation}
for $C=C(H,d,p,q)>0$.
For the third term we apply Lemma~\ref{lem:drift_convergence} and 
\begin{equation}
\label{A3}
 \lim_{n \to \infty} \Big \| \int_{0}^{t} |b_{r}(X^{n}) - b_{r}(X_{r})| \diff r \Big \|_{L_{2}(\Omega)} = 0.
\end{equation}

Now we combine~\eqref{A1}--\eqref{A3}, substitute them into~\eqref{Aall}, and pass to the limit in probability as $n\to\infty$. We obtain that for any $\eps>0$,
\begin{equation*}
\P\Bigl(	\Big |  \wt \psi_t - \int_0^t b_r(X_r) \diff r \Big |>\eps\Bigr) = 0,
\end{equation*}
which proves that for any $t\in[0,1]$
\[
\P\Bigl(\wt \psi_t = \int_0^t b_r(x+\wt \psi_r+Y_r) \diff r \Bigr)=1,
\]
where we also used the definition of $X$ in~\eqref{Xdef}. 
The above inequality being true for any $t \in [0, 1]$ and because the rational numbers are countable, it  also holds that almost surely,
\begin{equation}
\label{eq:rational_times}
   \wt \psi_t = \int_0^t b_r(x+\wt \psi_r+Y_r) \diff r, \quad t \in  [0, 1] \cap \mathbb Q.
\end{equation}
Therefore, as we showed that the two sides of \ref{eq:rational_times} are continuous, almost surely,
\[
   \wt \psi_t = \int_0^t b_r(x+\wt \psi_r+Y_r) \diff r, \quad t \in  [0, 1].
\]
This means that $\wt \psi$ solves \Gref{x;b;Y} almost surely, which proves the lemma.
\end{proof}

\subsection{Verification of Assumption~\ref{main_assumption} for Gaussian and Lévy processes}

In this section, we prove that our main hypothesis on the noise, Assumption~\ref{main_assumption}, is satisfied for the fractional Brownian and Lévy noises.
We begin with the Gaussian case. 

For $d \in \N$, let $\I_d$ denote the $d \times d$ identity matrix. For two $d \times d$ matrices $A$ and $B$, we write $A \succeq B$ if $A - B$ is positive semidefinite.

\begin{lemma}
\label{lem:fbmyes}
Let $d\in\N$,  $H \in (0, 1)$. Let $Y\colon\Omega\times[0,1]\to\R^d$ be a centred Gaussian process adapted to the filtration $\FF=(\F_t)_{t\in[0,1]}$. Assume that for any $(s,u,t)\in\Delta^3_{0,1}$ we have
\begin{equation}
	\label{eq:indep}
\text{the random vector $\E^u Y_t-\E^s Y_t$ is Gaussian and independent of $\F_s$}.
\end{equation}
Suppose that for some $c>0$ we have 
\begin{equation}
	\label{eq:nearlynondeterminism}
	- \frac{d}{dr} \Var(Y_{t}\,|\,\F_{r})\succeq c (t-r)^{2H-1}\I_d, \quad 0 \le  r < t \le 1.
\end{equation}
Then $Y$ satisfies Assumption~\ref{main_assumption}.
\end{lemma}

\begin{proof}
It is immediate to see that Part (i) of Assumption~\ref{main_assumption} holds thanks to \eqref{eq:indep}.

To show part (ii) of Assumption~\ref{main_assumption}, we note that \eqref{eq:indep} implies  $\Var(\E^{u}Y_{t}) = \Var(\E^{s}Y_{t}) + \Var(\E^{u}Y_{t} -\E^{s}Y_{t})$ for any $(s,u,t)\in\Delta^3_{0,1}$. Therefore, we deduce  for any $(s,u,t)\in\Delta^3_{0,1}$
\begin{align}\label{gaussbound}
\Var (\E^{u}Y_{t} -\E^{s}Y_{t})&= \Var(\E^{u}Y_{t}) - \Var(\E^{s}Y_{t})\nn
\\ &= (\Var(Y_{t}) -  \Var(\E^{s}Y_{t})) - (\Var(Y_{t}) -  \Var(\E^{u}Y_{t}))\nn
\\ &= \Var(Y_{t} - \E^{s}Y_{t}) - \Var(Y_{t} - \E^{u}Y_{t})\nn
\\ &= - \int_{s}^{u} \frac{d}{dr}\Var(Y_{t}\,|\,\F_{r})\diff r\nn
\\ &\succeq  c \int_{s}^{u} (t-r)^{2H-1}\I_d\diff r\nn
\\ &= \frac{c}{2H} \bigl( (t-s)^{2H} - (t-u)^{2H} \bigr)\I_d,
\end{align}
where we used that $Y_{t} - \E^{s}Y_{t}$ is independent of $\F_{s}$.
Recall that we denoted by  $p_{s, u, t}$ the density of $\E^u Y_t-\E^s Y_t$. Since by \eqref{eq:indep} the vector $\E^u Y_t-\E^s Y_t$ is Gaussian, we obtain
\[
\| p_{s, u, t} \|_{L^{\infty}(\R^{d})} = C \det (\Var(\E^{u}Y_{s, t})) ^{-1/2} \le C \bigl( (t-s)^{2H} - (t-u)^{2H} \bigr)^{-d/2},
\]
for some $C=C(c,H)>0$. Here we used that if a matrix $A$ satisfies $A\succeq \I_d$, then $\det A\ge1$. This proves part (ii) of Assumption~\ref{main_assumption}.

We continue with part (iii) of Assumption~\ref{main_assumption}. Recall the notation $Y_{s,t}:=Y_t-\E^s Y_t$ for $(s,t)\in\Delta^2_{0,1}$. Let $\beta < 0$, $\rho \in [1, \infty ]$ and $f\colon\R ^{d}\to \R$ be a measurable function.  By the definition of the Besov norm for $\beta<0$
\begin{equation*}
\|f\|_{\B_\rho^\beta}=\sup_{a\in(0,1]}a^{-\frac\beta2}\|g_a* f\|_{L_{\rho}(\R^d)}=
\sup_{a>0}(a\wedge1)^{-\frac\beta2}\|g_a* f\|_{L_{\rho}(\R^d)},
\end{equation*}	
where $g_a$ stands for the $d$-dimensional Gaussian density of variance $a \I_d$. Now let $(s,t)\in\Delta_{0,1}^2$. By \eqref{gaussbound}, we have 
\begin{equation*}
\Sigma:=\Var(Y_{s, t})-\frac{c}{2H} (t-s)^{2H}\I_d\succeq0.
\end{equation*}
Therefore, we can define a centred Gaussian random vector $Z_1$ with variance matrix $\Sigma$. Let $Z_2$ be a centred Gaussian random vector with variance matrix $\frac{c}{2H}(t-s)^{2H}\I_d$ independent of $Z_1$. Then we have 
\begin{align*}
\|\E f(Y_{s,t}+\cdot)\|_{L_\rho(\R^d)}&=\|\E f(Z_1+Z_2+\cdot)\|_{L_\rho(\R^d)}\\
&\le \|\E f(Z_2+\cdot)\|_{L_\rho(\R^d)}\\
&=\|g_{\frac{c}{2H}(t-s)^{2H}}* f\|_{L_{\rho}(\R^d)}
\le C \|f\|_{\B_\rho^\beta}(t-s)^{\beta H}.
\end{align*}	
for $C=C(c,\beta,H)>0$, which is the desired bound~\eqref{eq:hyp_smoothing_nnew}. Thus part (iii) of Assumption~\ref{main_assumption} is proved.
\end{proof}

\begin{corollary}\label{c:318} Let $d\in\N$, $H\in(0,1)$,  $\FF=(\F_t)_{t\in[0,1]}$ is a filtration.
\begin{enumerate}[$($i$)$]
\item 
Let $Y\colon\Omega\times[0,1]\to\R^d$ be a centred Gaussian process. Assume that  $\FF$ is its natural filtration. Assume that $Y$ satisfies  condition \eqref{eq:nearlynondeterminism} for some $c>0$.
Then $Y$ satisfies Assumption~\ref{main_assumption}.

\item Let $Y=W^H$ be a $d$-dimensional $\FF$-fractional Brownian motion of Hurst parameter $H$. Then $Y$ satisfies Assumption~\ref{main_assumption}.
	
\item Let $Y$ be a $d$-dimensional Riemann-Liouville $\FF$-fractional Brownian motion of Hurst parameter $H$. Then $Y$ satisfies Assumption~\ref{main_assumption}.
\end{enumerate}
\end{corollary}
\begin{proof}
(i). Let  $(s, u,t)\in \Delta^{3}_{0, 1}$. The conditional expectation $\E^{s}Y_{t}$ is the orthogonal projection of $\E^u Y_{t}$ onto $L^{2}(\Omega, \F_{s}, \mathbb P)$. Therefore, $\E^u Y_{t}-\E^{s}Y_{t}$ is orthogonal to $L^{2}(\Omega, \F_{s}, \mathbb P)$. This implies that  $\E^{u} Y_{t} - \E^{s} Y_{t}$ is Gaussian and independent of $\F_{s}$ (recall that $Y$ is a Gaussian process and $\FF$ is its natural filtration). Thus, the process $Y$ satisfies condition \eqref{eq:indep}. Condition \eqref{eq:nearlynondeterminism} is satisfied by assumptions of the corollary. Hence, by Lemma \ref{lem:fbmyes}, $Y$ satisfies Assumption~\ref{main_assumption}.

(ii). It follows immediately from the representation of $W^{H}$ in Proposition~\ref{prop:representation}, that $W^H$ satisfies \eqref{eq:indep}. To verify \eqref{eq:nearlynondeterminism}, let $(r,t)\in\Delta^2_{0,1}$. 
By the representation of $W_{t}^{H}$ of Proposition~\ref{prop:representation},
\[
\Var(W^{H}_{t}\,|\,\F_{r}) = \Var(W^{H}_{t} - \E^{r}W^{H}_{t}) = \Var \Bigl( \int_{r}^{t}K_{H}(t, u) \diff B_{u}\Bigr) = \int_{r}^{t} K_{H}(t, u)^{2} \diff u\cdot \I_d.
\]
Therefore, by using Proposition~\ref{prop:representation}\ref{part1fbm},
\[
- \frac{d}{dr}\Var(W^{H}_{t}\,|\,\F_{r}) = K_{H}(t, r)^{2}\I_d \ge C (t-r)^{2H-1}\I_d,
\]
for $C=C(H,d)>0$,
which proves \eqref{eq:nearlynondeterminism}. Therefore, by Lemma \ref{lem:fbmyes}, Assumption~\ref{main_assumption} holds.

(iii). Recall that if $Y$ is a $d$-dimensional Riemann-Liouville $\FF$-fractional Brownian motion, then 
there exists an $\FF$-Brownian motion $B$ such that for some $C>0$ we have 
	\begin{equation*}
Y_t=C\int_0^t (t-s)^{H-\frac12}\,dB_s,\quad t\in[0,1].
\end{equation*}
This implies that $Y$ satisfies \eqref{eq:indep} and for any $(r,t)\in\Delta^2_{0,1}$
\[
- \frac{d}{dr}\Var(W^{H}_{t}\,|\,\F_{r}) = C  (t-r)^{2H-1}\I_d.
\]
Thus  \eqref{eq:nearlynondeterminism} is also satisfied and by Lemma \ref{lem:fbmyes}, Assumption~\ref{main_assumption} holds.
\end{proof}

Next, let us verify Assumption~\ref{main_assumption} for a Lévy processes satisfying structural condition
 \eqref{eq:characteristic_bound}.
 
\begin{lemma}
\label{lem:assumption_levy}
Let $\FF=(\F_t)_{t\in[0,1]}$ be a complete filtration and let $L$ be a $d$-dimensional $\FF$-Lévy process satisfying \eqref{eq:characteristic_bound} for some $\alpha \in (1, 2)$.
Then $L$ satisfies Assumption~\ref{main_assumption} with this filtration $\FF$ and $H := 1/\alpha$.
\end{lemma}

\begin{proof}
Since $L$ is a Markov process, 
\begin{equation}\label{ystlevy}
  Y_{s, t} := L_{t} - \E ^{s}L_{t} = L_{t} - L_{s}, \quad (s, t)\in \Delta ^{2}_{0, 1}.
\end{equation}
Therefore, $Y_{s, t}$ is independent of $\F_{s}$, which proves part (i) of Assumption~\ref{main_assumption}.

For any $t >0$, define $p_{t}$ the law of $L_{t}$.  
By the Fourier inversion theorem, the lower bound on the characteristic exponent \eqref{eq:characteristic_bound}, and a change of variables in the last integral, we obtain
\begin{align}
\label{eq:inverse_fourier}
\| p_t \|_{L^\infty(\R^{d})} &\le \| \hat p_t \|_{L^1(\R^{d})}
\le \int_{\R^d} e^{-t \Re \Phi(\lambda )} \diff \lambda 
\le \int_{\R^d} e^{-c_{1} t |\lambda |^\alpha} \diff \lambda +\int_{|\lambda|\le N} 1 \diff\lambda \nn\\
&= C t^{-d / \alpha}
\end{align}
for $C=C(\alpha,c_1,d,N)$.
Now let $(s, u, t) \in \Delta ^{3}_{0, 1}$. Then 
\[ 
p_{s, u, t} =\Law(\E^{u} Y_{s,t})=\Law(L_u-L_s) = \Law(L_{u-s}) = p_{u-s}.
\] 
By applying \eqref{eq:inverse_fourier}, this proves part (ii) of Assumption~\ref{main_assumption}.

Finally, we verify part~(iii). Let $\rho \in [1, \infty ]$, $n \in \N$, and $f : \R ^{d}\to \R$ measurable. Using Young's convolution inequality for Besov spaces \cite[Theorem~2.2]{KS21}, we obtain for any $t \in (0, 1]$,
\begin{equation}
\label{eq:conv_besov}
\| f * p_{t} \|_{L_{\rho }(\R ^{d})} 
\le C\| f * p_{t} \|_{\B_{\rho, 1}^{0}(\R ^{d})} 
\le C\| f \|_{\B_{\rho, 1}^{-n}(\R ^{d})} \| p_{t} \|_{\B_{1, \infty}^{n}(\R ^{d})},
\end{equation}
for $C=C(\rho,d,n)$. 
Note by $(\mathcal P_{t})_{t \ge 0}$ the semigroup associated to $L$. By \cite[Proposition 2.8]{BDG24}, condition  \eqref{eq:characteristic_bound} implies  for any bounded measurable function $\phi : \R^{d} \to \R$,
\begin{equation*}
\| \nabla \mathcal P_{t} \phi  \|_{L^{\infty}(\R^{d})} \le C t^{-1/\alpha} \| \phi  \|_{L^{\infty }(\R^{d})}, \quad t \in (0, 1].
\end{equation*}
for $C=C(\alpha,d)$.
Let $\mu $ denote a multi-index such that $|\mu | = n$, and let $x_{i} \in 1\dots d$ for $i \in 1\dots n$ be such that $\partial^{\mu} = \partial_{x_{1}}\dots \partial_{x_{n}}$.
We remark that the derivation commutes with $\mathcal P_{t/n}$, so $\partial^{\mu}\mathcal P_{t} = (\partial_{x_{1}}\mathcal P_{t/n})\dots (\partial_{x_{n}}\mathcal P_{t/n})$.
We can therefore iterate the previous bound $n$ times and we obtain 
\begin{equation}
\label{eq:H1}
\| \partial^{\mu} \mathcal P_{t} \phi  \|_{L^{\infty}(\R^{d})} \le  C^{n} (t/n)^{-n/\alpha }\| \phi  \|_{L^{\infty }(\R^{d})} = C t^{-n/\alpha}\| \phi  \|_{L^{\infty }(\R^{d})}, \quad t \in (0, 1]
\end{equation} 
for $C=C(\alpha,d,n)$.
Because of \eqref{eq:characteristic_bound}, the characteristic exponent $\Phi$ clearly satisfies 
\[
\lim_{|\lambda| \to \infty } \frac{\Re \Phi  (\lambda) }{\log (1 + |\lambda|)} = \infty,
\]
so by \cite[Theorem 1 (a, b)]{Knopova}, the density $p_{t}$ is in $C^{\infty}(\R^{d})$ for any $t>0$.

Now we want to show that the bound \eqref{eq:H1} implies 
\begin{equation}
\label{eq:sobolev_bound_density}
\| \partial^{\mu} p_{t} \|_{L^{1}(\R ^{d})}  \le  C t^{-n/\alpha}, \quad t \in (0, 1],
\end{equation}
which we will do by adapting the argument of \cite[Lemma~4.1]{KS19} valid for $n = 1$. 
Let $\chi_{m}$ for any $m \in \N$ be a smooth cut-off function with $\1_{B(0, m)} \le \chi_{m} \le \1_{B(0, m+1)}$.
Fix $m \in \N$, $x \in \R ^{d}$ and $t \in (0, 1]$. Recall that the operator $\mathcal P_{t}$ is just the convolution with the smooth function $p_{t}$, 
\[
  \mathcal P_{t} (\phi \chi _{m})(x)  = \E [(\phi \chi _{m})(x + L_{t})] = \int_{\R ^{d}}(\phi\chi _{m})(x+y)p_{t}(y)\diff y.
\]
 By differentiating under the integral (see \cite[Proposition~A.1]{KS19}),
\begin{equation}
\label{eq:diff_under_integ}
  \partial ^{\mu }\mathcal P_{t}(\phi \chi_{m})(x) = (-1)^{n}\int_{\R ^{d}} (\phi\chi _{m})(x+y)\partial ^{\mu }p_{t}(y) \diff y.
\end{equation}
The idea is to apply the following duality relation, where the supremum is taken over the bounded and measurable functions:
\begin{align*}
\int_{|y| \le m} |\partial ^{\mu }p_{t}(y)|\diff y 
&= \sup_{\phi} \Bigl\{ \Big| \int_{|y| \le m} \phi(y) \partial ^{\mu }p_{t}(y)\diff y\Big|;\ \| \phi \|_{L^{\infty}(\R^{d})} \le 1 \Bigr\}  
\\&= \sup_{\phi} \Bigl\{ \Big| \int_{|y| \le m} (\chi _{m}\phi)(y) \partial ^{\mu}p_{t}(y)\diff y\Big|;\ \| \phi \|_{L^{\infty}(\R^{d})} \le 1 \Bigr\}  
\\ &\le  \sup_{\phi} \Bigl\{ \|  \partial ^{\mu }\mathcal P_{t}(\phi \chi_{m})\|_{L^{\infty}(\R^{d})};\ \| \phi \|_{L^{\infty}(\R^{d})} \le 1 \Bigr\}  
\\ &\le C t^{-n/\alpha},
\end{align*}
for $C=C(\alpha,d,n)$, where we also used the definition of $\chi_{m}$, \eqref{eq:diff_under_integ} at $x=0$, and \eqref{eq:H1}. 
As the previous bound is valid for any $m \in \N$, we can pass to the limit as $m$ goes to infinity, and this proves \eqref{eq:sobolev_bound_density}.

The multi-index $\mu $ being arbitrary, we immediately deduce from \eqref{eq:sobolev_bound_density} that because $t \in (0, 1]$, 
\begin{equation}
\label{eq:density_sobolev}
\| p_{t} \|_{\B_{1, \infty}^{n}(\R ^{d})} 
\le C \| p_{t} \|_{W^{n,1}(\R ^{d})}
\le C (1 + \sup_{|\mu| = n}\| \partial ^{\mu } p_{t} \|_{L^{1}(\R ^{d})})
\le C t^{-n/\alpha}.
\end{equation}
Combining \eqref{eq:conv_besov} and \eqref{eq:density_sobolev} we obtain for $t \in (0, 1]$,
\begin{equation}
\label{eq:space1}
\| f * p_{t} \|_{L_{\rho }(\R ^{d})} \le C t^{-n/\alpha} \| f \|_{\B_{\rho, 1}^{-n}(\R ^{d})},
\end{equation}
for $C=C(\alpha,\rho,d,n)$.
Using Young's convolution inequality, we also have for $t \in (0, 1]$,
\begin{equation}
\label{eq:space0}
\| f * p_{t} \|_{L_{\rho }(\R ^{d})} 
\le \| f \|_{L_{\rho }(\R ^{d})} 
\le C \| f \|_{\B^{0}_{\rho, 1}(\R ^{d})}.
\end{equation}
Note that the constants in \eqref{eq:space1} and \eqref{eq:space0} do not depend on $f$ and $t$. The map $f \mapsto f * p_{t}$ being linear, we are with \eqref{eq:space1} and \eqref{eq:space0} in the framework of the real interpolation theory. Let $\theta \in (0, 1)$. We have from \cite[Theorem~4.25]{Saw18} the interpolation space
\[
  (\B^{0}_{\rho, 1}(\R ^{d}), \B_{\rho, 1}^{-n}(\R ^{d}))_{\theta, \infty} = \B^{-n\theta}_{\rho, \infty}.
\]
By the real interpolation theorem \cite[Theorem~4.24]{Saw18}, we have an (exact) interpolation pair of exponent $\theta$ and
\begin{equation}
\label{eq:interpolated}
\| f * p_{t} \|_{L_{\rho }(\R ^{d})} \le C t^{-n\theta/\alpha}\| f \|_{\B^{-n\theta}_{\rho}} \quad t \in (0, 1].
\end{equation}
for  $C=C(\alpha,d,n)$.
Finally, let $\beta < 0$ and $(s, t) \in \Delta_{0, 1}^{2}$. Remember that $\Law(Y_{s, t}) = p_{t-s}$. By choosing $n \in \N$ such that $-n < \beta$ and  $\theta = -\frac{\beta }{n} \in (0, 1)$ in \eqref{eq:interpolated}, we obtain
\[
\| \E f(Y_{s, t} + \cdot) \|_{L_{\rho}(\R^{d})} 
= \| f * p_{t-s} \|_{L_{\rho }(\R ^{d})} 
\le C (t-s)^{\frac\beta\alpha}\| f \|_{\B^{\beta }_{\rho}},
\]
for $C=C(\alpha,\beta,\rho,d)$,
which proves part (iii) of Assumption~\ref{main_assumption}. 
\end{proof}

\subsection{Proofs of weak existence}

Finally, we are ready to show weak existence of solutions to the SDEs~\eqref{eq:main_eq} and~\eqref{eq:main_eq_levy}. The following simple argument allows us to focus on the case where $q < \infty$.
\begin{remark}\label{r:qinf}
  The case $q=\infty$ in Theorem~\ref{th:weak_existence} and Theorem~\ref{th:levy_existence} follows from the case $q \in [1, \infty)$. Indeed, let $d$, $H$, $p$, $q$, $x$, $T$ and $b$ be as in Theorem~\ref{th:weak_existence}, satisfying~\eqref{eq:main_cond} with $q = \infty$. Since the inequality of condition~\eqref{eq:main_cond} is strict, there exists $\wt q \in [1, \infty)$ such that~\eqref{eq:main_cond} with $\wt q$ instead of $q$ is still satisfied. Then $b \in L_{\wt q}([0, T], L_p(\R^d))$ and by applying (i) with $\wt q$ instead of $q$, the SDE~\eqref{eq:main_eq} has a weak solution. Similarly, for (ii), applying the result for $\wt q$ gives the result for $q=\infty$. The same can be done for Theorem~\ref{th:levy_existence}.
\end{remark}
Therefore, without loss of generality, we consider only the case $q<\infty$ in the following.

The scheme of the proofs of Theorem~\ref{th:weak_existence} and Theorem~\ref{th:levy_existence} is quite standard once the bounds from the previous sections have been obtained; see, e.g., \cite{BLM23}, \cite[Section~8]{GG23}, though some modifications are needed to handle the L\'evy case. We consider a sequence of solutions to the regularized equation \eqref{eq:approx}, and Corollary~\ref{cor:step02b} allows us to establish the tightness of this sequence. After extracting a  convergent subsequence and verifying that the limit noise is the one we are interested in, we apply the stability result Lemma~\ref{lem:stability} to conclude that the limit of our subsequence is a weak solution to~\eqref{eq:main_eq}.

\begin{proof}[Proof of Theorem~\ref{th:weak_existence}]
We first remark that (i) follows straightforwardly from (ii). Indeed, such a sequence of smooth functions $b^n \to b$ exists by a density argument, and on any probability space containing a fractional Brownian motion $W^H$, we can construct, by Banach's fixed-point theorem, the strong solution $X^n$ to (\ref{eq:approx}) that is adapted to the natural filtration of $W^H$.
Therefore, the conditions of (ii) are satisfied, which implies  weak existence.

Thus, we concentrate on (ii). 

\textbf{Step 1: setup.} We have a probability space $(\Omega, \F, \P)$ with a fractional Brownian motion $W^{H}$. We denote by $\FF = (\F_{t})_{t\in[0,1]}$ the natural filtration associated with $W^{H}$. By assumption, we have a sequence $\{ b_{n}, n \in\N \}$ of smooth functions converging to $b$ and strong solutions $X^{n}$ to (\ref{eq:approx}). The solutions $X^{n}$ are therefore adapted to $\FF$.  Without loss of generality, we assume $\sup_{n\in\N} \|b^n\|_{L_q([0,1],L_p(\R^d))}\le 2
\|b\|_{L_q([0,1],L_p(\R^d))}$.

We define the drift $\psi^n$ by
  \begin{equation}
    \label{defdrift} \psi^n_t := \int_0^t b_r^n(X_r^n) \diff r, \quad t \in [0, 1]
  \end{equation}
  when equation (\ref{eq:approx}) is satisfied and $0$ otherwise.
  Therefore, by (\ref{eq:approx}) we have almost surely
 \begin{equation}
    \label{eq:sum} X^n_t = x^n + \psi_t^n+ W_t^{H} \quad \text{for all $t$ in $[0, 1]$}.
  \end{equation}
Hence, $\psi^n$ solves \Gref{x^n;b^n;W^H}.

  \textbf{Step 2: tightness.}
  Take $\theta \in [0, \frac{1}{2})$ such that $\theta > H - \frac{1}{2}$ and define, using Proposition~\ref{prop:representation} the Brownian motion $B := \Phi (W^{H})$. 
It is easy to see that the sequence
  \begin{equation}
    \label{tight} \{\Law(\psi^n, W^{H}, B), n \in\N\}\ \text{is tight in}\ C([0, 1], \R^{d})^{2} \times \C^{\theta}([0, 1], \R^d).
  \end{equation}
More precisely, we use Lemma~\ref{lem:fbmyes} to show that Assumption~\ref{main_assumption} is satisfied for $W^H$ and $\FF$. Because $b^{n}$ is bounded by hypothesis, $[\psi^{n}]_{\C^{1-\var}([0, 1])} \le \| b^{n} \|_{L_{\infty}([0, 1]\times \R^d)} < \infty$ for all $n$. We can therefore apply Corollary~\ref{cor:step02b} and the tightness follows using this bound.
We refer the reader to the arguments of \cite[Lemma 5.2]{BLM23} for more details on this. 
%

  \textbf{Step 3: convergence.}
  Because $\C ([0, 1], \R^{d})^{2} \times \C^{\theta}([0, 1], \R^d)$ is a complete separable metric space there exists by Prokhorov's theorem a subsequence of (\ref{tight}) that converges in law to a probability measure.
  By Skorokhod's representation theorem, there exists another probability space $(\wt \Omega , \wt \F , \wt \P)$, random variables $\wt \psi $,  $\wt W^H$, $\wt B$, and for each $n \in\N$, random variables $\wt \psi^n$, $\wt W^{n, H}$, $\wt B^n$ such that
  \begin{equation}
    \label{skoeq} \Law(\wt \psi^n,  \wt W^{H, n}, \wt B^n) = \Law(\psi^n,  W^H, B),
  \end{equation}
  and such that a subsequence of
  \begin{equation}
    \label{skoconv} (\wt \psi^n, \wt W^{H, n},\wt B^n)\ \text{converges a.s.\ to}\ (\wt \psi,  \wt W^H, \wt B)\ \text{in}\ C ([0, 1], \R^{2d}) \times \C^{\theta}([0, 1], \R^d).
  \end{equation}
  To avoid overburdening the proof with indices, we again denote one of these subsequences by $(\wt \psi^n,  \wt W^{H, n},\wt  B^n)$.
    The limit $(\wt X:=x+\wt\psi+\wt W^H, \wt W^H)$ is our candidate for a solution.

  \textbf{Step 4: the limit noise is a $\wt \FF$--fractional Brownian motion.}
  Introduce the filtrations $\wt \FF = (\wt\F_t)_{t\in[0,1]}$, $\wt \FF^n = (\wt\F^n_t)_{t\in[0,1]}$, $n\in\N$, on $(\wt \Omega, \wt \F, \wt \P)$
  \begin{equation}\label{filtr}
    \wt \F_t := \sigma (\wt \psi_s, \wt B_s\ \text{for}\ s\in[0,t]),\qquad 
    \wt \F^n_t := \sigma (\wt B^n_s\ \text{for}\ s\in[0,t]).
  \end{equation}
%
We claim that $\wt B$ is a $\wt \FF$--Brownian motion and that $\wt W^H=\Psi(\wt B)$, see Proposition~\ref{prop:representation}, is a $\wt \FF$--fractional Brownian motion. This result can be found in the proof of \cite[Corollary 5.4]{BLM23}.

  \textbf{Step 5: the limit is a solution.}
We would like to apply Lemma~\ref{lem:stability}. 
By the above, $\wt W^{H,n}$ is a $\wt\FF^n$--fractional Brownian motion and $\wt W^{H}$ is a $\wt\FF$--fractional Brownian motion. Therefore, by Lemma~\ref{lem:fbmyes},
the pairs $(\wt W^{H,n}, \wt\FF^n)$  and $(\wt W^{H}, \wt\FF)$ satisfy Assumption~\ref{main_assumption}. 
Next, we see that by definition the drift $\wt \psi^{n}$ solves \Gref{x^n;b^n;\wt W^{H, n}} and is adapted to $\wt\FF^n$, $b^{n}$ is bounded. By construction, $\wt \psi$ is adapted to $\wt\FF$.
By~\eqref{skoconv}, $\wt \psi^{n} \to \wt \psi$ and $\wt W^{H, n} \to \wt W^{H}$ almost surely in $C ([0, 1], \R ^{d})$. Hence, condition~\eqref{psiycond} holds.

Thus, all the conditions of Lemma~\ref{lem:stability} are satisfied and we have 
\begin{equation}\label{limweak}
\P\Bigl(\forall t\in[0,1]:\, \wt \psi_t=\int_0^t b_r(x+\wt \psi_r+\wt W^H_r)\,dr\Bigr)=1.
\end{equation}

Put now $\wt X_t:=x+\wt \psi_t+\wt W_t^H$, $t\in[0,1]$. We claim that $(\wt X_t,\wt W^H)$ is a weak solution to~\eqref{eq:main_eq}. Indeed, by Step~4, $\wt W^H$ is a $\wt \FF$--fractional Brownian motion and $\wt X$ is adapted to $\wt \FF$. The identity~\eqref{limweak} implies that $(\wt X,\wt W^H)$ satisfy~\eqref{eq:main_eq}, and thus,  $(\wt X_t,\wt W^H)$ is indeed a weak solution to~\eqref{eq:main_eq}.
\end{proof}

The proof of weak existence in the Lévy case closely follows the proof of Theorem~\ref{th:weak_existence} with two minor differences. On the one hand, it is easier to demonstrate that the limit $L$ of the noise is still a Lévy process. Indeed, since $L$ is Markovian, we do not need to consider its representation with respect to an additional auxiliary Markov process. On the other hand, the Lévy noise is not continuous, so we have to work in the Skorokhod space $\D([0,1], \R^d)$ instead of $C([0,1], \R^{d})$.

\begin{proof}[Proof of Theorem~\ref{th:levy_existence}]
 As in the proof of Theorem~\ref{th:weak_existence}, part (i) follows immediately from part (ii). Therefore, we only prove (ii).

\textbf{Step 1: setup.} 
 We are given a probability space $(\Omega, \F, \P)$ with a Lévy process $L$ satisfying \eqref{eq:characteristic_bound}. We denote by $\FF = (\F_{t})_{t}$ the natural filtration associated with $L$. By hypothesis, we have a sequence $\{ b_{n}, n \ge 1 \}$ of smooth functions converging to $b$ and strong solutions $X^{n}$ to (\ref{eq:approx_levy}). The solutions $X^{n}$ are therefore adapted to $\FF$.  
We define the drift $\psi^n$ by
  \begin{equation}
    \label{defdrift_levy} \psi^n_t := \int_0^t b_r^n(X_r^n) \diff r, \quad t \in [0, 1]
  \end{equation}
  when equation (\ref{eq:approx}) is satisfied and $0$ otherwise.
  Therefore by (\ref{eq:approx}), we have almost surely
 \begin{equation}
    \label{eq:sum_levy} X^n_t = x_0 + \psi_t^n+ L_t \quad \text{for all $t$ in $[0, 1]$}.
  \end{equation}
Thus, $\psi^n$ solves \Gref{x^n;b^n;L}.

 \textbf{Step 2: tightness.} 
The main hypothesis on the noise Assumption~\ref{main_assumption} is satisfied by Lemma~\ref{lem:assumption_levy}.
Exactly as for Theorem~\ref{th:weak_existence}, the sequence of laws
  \begin{equation}
    \label{tight_levy} \{\Law(\psi^n, L), n \ge 1\}\ \text{is tight in}\ C([0, 1], \R^d) \times \D([0, 1], \R^d).
  \end{equation}
The difference is that here the noise lives in the Skorokhod space $\D([0, 1], \R^d)$, which we recall is Polish by \cite[Theorem~VI.1.14(a)]{JS87}. We still refer the reader to the arguments of \cite[Lemma 5.2]{BLM23} showing the tightness from Corollary~\ref{cor:step02b}.

Tightness of the sequence $(X^n, L)_{n\in\N}$ in $\D([0, 1], \R^{2d})$ follows from~\eqref{tight_levy} and the fact that $\psi ^{n}$ is continuous, see \cite[Proposition~VI.1.23]{JS87}.

%

  \textbf{Step 3: convergence.}
  Because $C([0, 1], \R^d) \times \D([0, 1], \R^d)$ is a complete separable metric space there exists by Prokhorov's theorem a subsequence of (\ref{tight_levy}) that converges in law to a probability measure. Using again Skorokhod's representation theorem, we see that there exists a probability space $(\wt \Omega , \wt \F , \wt \P)$, random variables $\wt \psi $, $\wt L$ and for each $n \ge 1$, random variables $\wt \psi^n$ and $\wt L^n$ such that $\Law(\wt \psi^n, \wt L^n) = \Law(\psi^n, L),$ and such that a subsequence of $(\wt \psi^n, \wt L^n)$ converges almost surely to $(\wt \psi, \wt L)$ in $C([0, 1], \R^d) \times \D([0, 1], \R^d)$. As before,  we denote one of these subsequences again by $(\wt \psi^n, \wt L^n)$.

  \textbf{Step 4: the limit noise is an $\wt \FF$-Lévy process with the same law.}
  Clearly, $\Law(\wt L) = \Law(\wt L^{n})$. Therefore, by Proposition~\ref{prop:skorokhod}, $\wt L$ is a Lévy process with the same law as $L$.
  We introduce the filtrations $\wt \FF = (\wt\F_t)_{t\in[0,1]}$, $\wt \FF^n = (\wt\F^n_t)_{t\in[0,1]}$, $n\in\N$, on $(\wt \Omega, \wt \F, \wt \P)$
  \begin{equation}\label{filtrl}
  	\wt \F_t := \sigma (\wt \psi_s, \wt L_s\ \text{for}\ s\in[0,t]),\qquad 
  	\wt \F^n_t := \sigma (\wt L^n_s\ \text{for}\ s\in[0,t]).
  \end{equation}  
  By Proposition~\ref{prop:skorokhod}, the convergence of $\wt L^n$ to $\wt L$ in  $\D([0, 1], \R^{d})$ implies the convergence of the finite dimensional distributions of $\wt L^n$ to $\wt L$.
  The same reasoning done in \cite[Corollary 5.4]{BLM23} for a Brownian motion proves that for $(s, t) \in \Delta^2_{0, 1}$, $\wt L_t - \wt L_s$ is independent of $\wt \F_s$. Therefore, $(\wt L_t)_{t\in[0,1]}$ is an $\wt \FF$-Lévy process with the same law as $L$.

 \textbf{Step 5: the limit is a solution.}
 Again, we would like to apply Lemma~\ref{lem:stability}. 
By the above, $\wt L^{n}$ is an $\wt\FF^n$-Lévy process and $\wt L$ is an $\wt\FF$-Lévy process, both with the law of $L$. Therefore, by Lemma~\ref{lem:assumption_levy},
the pairs $(\wt L^{n}, \wt\FF^n)$  and $(\wt L, \wt\FF)$ satisfy Assumption~\ref{main_assumption}. 
By construction, the drift $\wt \psi^{n}$ solves \Gref{x^n;b^n;\wt L^{n}} and is adapted to $\wt\FF^n$, and $b^{n}$ is bounded. By definition, $\wt \psi$ is adapted to $\wt\FF$.
By Step~3, $\wt \psi^{n} \to \wt \psi$ almost surely in $C ([0, 1], \R ^{d})$ and $\wt L^{n} \to \wt L$ almost surely in $\D ([0, 1], \R ^{d})$, which implies by Proposition~\ref{prop:skorokhod} the almost sure convergence of $\wt L^n_t$ to  $\wt L_t$ at any fixed $t\in[0,1]$. Hence, the condition~\eqref{psiycond} holds.

Thus, all the conditions of Lemma~\ref{lem:stability} are satisfied and we have 
\begin{equation*}
	\P\Bigl(\forall t\in[0,1]:\, \wt \psi_t=\int_0^t b_r(x+\wt \psi_r+\wt L_r)\,dr\Bigr)=1.
\end{equation*}
Putting again $\wt X_t:=x+\wt \psi_t+\wt L_t$, $t\in[0,1]$, we see from the above equation that $(\wt X_t,\wt L)$ is a weak solution to~\eqref{eq:main_eq_levy}.
\end{proof}

\subsection{Proof of strong existence}

We now turn to the proofs of Theorems~\ref{th:strong_existence} and~\ref{th:str_levy_existence}, which establish strong existence in dimension $1$. The main idea is as follows. In the proofs of Theorems~\ref{th:weak_existence} and~\ref{th:levy_existence} we constructed a sequence of solutions to regularized SDEs that converges weakly, which implied weak existence. It turns out that if $d=1$, one can actually  construct a sequence of solutions to regularized SDEs that converges almost surely, which yields strong existence. This construction is based on the comparison principle. The idea goes back to  Gy\"ongy and Pardoux \cite{GP93a, GP93b}; however, there is an important difference here. In \cite{GP93a}, the boundedness of the drift was assumed, while \cite{GP93b} relied on the Girsanov theorem. In our setting, the Girsanov theorem is not available and the drifts may be unbounded; therefore, we instead rely on the a priori bound \eqref{c313} and the Krylov-type bound \eqref{3bound}.

As in the case of weak existence, we can, without loss of generality, assume that $q \in [1, \infty)$.

\begin{proof}[Proof of Theorems \ref{th:strong_existence} and \ref{th:str_levy_existence}.]
Let $(\Omega, \F, \P)$ be a probability space. To cover both the case of fractional and Lévy noise, we define $Y : \Omega \times [0, 1] \to \R$ to be a stochastic process satisfying Assumption~\ref{main_assumption} with $\FF = (\F_{t})_{t \in [0, 1]}$ being the natural filtration of $Y$. 
We recall that by Lemmas~\ref{lem:fbmyes} and~\ref{lem:assumption_levy}, the processes from Theorems \ref{th:strong_existence} and~\ref{th:str_levy_existence} both satisfy Assumption~\ref{main_assumption}.

Because $p, q < \infty$, we can construct a sequence $(b^{n})_{n \in \N}$  of smooth and bounded functions converging to $b$ in $L_{q}([0, 1], L_{p}(\R))$. Moreover, up to taking a subsequence we can suppose that 
\begin{equation}
\label{eq:fast_lqp}
\forall n \in \N,\ \| b^{n} - b \|_{L_{q}([0, 1], L_{p}(\R))} \le 2^{-n}
\quad \text{and} \quad \lim_{n \to \infty}b^{n} = b\ \text{almost everywhere}.
\end{equation}
We remark that because $\sup_{k \ge 0} |b^{k}| \le |b| + \sup_{k \ge 0}|b^{k} - b|$, we can deduce from \eqref{eq:fast_lqp} that 
\begin{align}
\label{eq:uniform_sup_bk}
\| \sup_{k \ge 0} |b^{k}| \|_{L_{q}([0, 1], L_{p}(\R))} 
&\le \| b \|_{L_{q}([0, 1], L_{p}(\R))} + \sum_{k =0}^{\infty } \| b^{k} - b \|_{L_{q}([0, 1], L_{p}(\R))}\nonumber
\\ &\le \| b \|_{L_{q}([0, 1], L_{p}(\R))} + 2 < \infty.
\end{align}

\textbf{Step 1.}
First, we construct an appropriate sequence of smooth drifts that converges to $b$ in $L_{q}([0, 1], L_{p}(\R))$. As in \cite{GP93a, GP93b}, define for any integers $ 0 \le n \le k$
\begin{equation*}
  f^{n,k} := \min(b^{n}, \dots, b^{k})\quad \text{and}\quad f^{n,\infty} := \inf_{k \ge n}f^{n, k} = \inf _{k \ge n} b^{k}.
\end{equation*}

Using \eqref{eq:uniform_sup_bk}, we get for any $0\le n \le k$
\begin{equation}\label{fnkbound}
	\| f^{n, k} \|_{L_{q}([0, 1], L_{p}(\R))} \le \| \sup_{j \ge 0} |b^{j}| \|_{L_{q}([0, 1], L_{p}(\R))}\le 
	\|b \|_{L_{q}([0, 1], L_{p}(\R))}+2.
\end{equation}

Next, we claim that for any $0 \le n \le k$, $x\in\R$ we have 
\begin{equation}\label{claimstrong}
|f^{n, k}(x) - f^{n, \infty}(x)|\le|b^{k}(x)-b(x)| + \sum_{j=k+1}^{\infty}|b^{j}(x)-b(x)|.
\end{equation}
Indeed, if $\inf_{j \ge n} b^{j}(x)=b^i(x)$ for some $i\le k$, then $f^{n, k}(x)=f^{n, \infty}(x)=b^i(x)$ and the claim trivially holds. Otherwise,  $f^{n, \infty}(x)=f^{k+1, \infty}(x)$ and 
\begin{align*}
 |f^{n, k}(x) - f^{n, \infty}(x)| &= f^{n, k}(x) - f^{n, \infty}(x)\le b^k(x)-f^{k+1, \infty}(x)\\
 &\le   (b^k(x)-b(x))-(f^{k+1, \infty}(x)-b(x))\\
 &\le |b^k(x)-b(x)|+|f^{k+1, \infty}(x)-b(x)|
 \\
 &\le |b^{k}(x) - b(x)| + \sum_{j = k+1}^{\infty}|b^{j}(x)-b(x)|
\end{align*}
and \eqref{claimstrong} holds. 
Therefore by the assumption \eqref{eq:fast_lqp} , $\| f^{n, k} - f^{n, \infty} \|_{L_{q}([0, 1], L_{p}(\R))} \le  2^{1-k},$ so 
 the sequence $f^{n, k}$ converges to $f^{n, \infty}$ in $L_{q}([0, 1], L_{p}(\R))$ as $k$ goes to infinity. Finally, $b - f^{n, \infty}= \sup_{j \ge n} (b - b^{j})$, so again 
\[
\lim_{n\to\infty} \| b- f^{n, \infty} \|_{L_{q}([0, 1], L_{p}(\R))} \le \lim_{n\to\infty}\sum_{j = n}^{\infty} \| b^{j} - b \|_{L_{q}([0, 1], L_{p}(\R))} \le \lim_{n\to\infty}2^{1-n}=0. 
\]

\textbf{Step 2.} Let $0 \le n \le k$. The function $f^{n,k}$  is Lipschitz and bounded. Therefore \Gref{x;f^{n, k};Y} has a unique strong solution, which we denote by $\varphi^{n, k}$. Let us provide uniform in $k,n$ bounds on the moments of $\varphi_t^{n,k}$.

By the hypothesis on $(b^{j})_{j}$, the function $f^{n, k}$ is bounded. 
Now because  $\varphi^{n, k}$ solves \Gref{x;f^{n, k};Y}, 
\begin{equation*}
	\|  [\varphi^{n, k}]_{\C^{1-\var}([0, 1])}\|_{L_{m}(\Omega)} \le \| f^{n, k} \|_{L_{\infty }([0, 1] \times \R)} < \infty, \quad m \ge 2.
\end{equation*}
We apply Corollary~\ref{cor:step02b} to $\varphi^{n, k}$ and  obtain that there exists a constant $\gamma > 0$ such that for any $m \ge 2$ and any $(s, t) \in \Delta ^{2}_{0, 1}$,
\begin{align}\label{eq:l2varphi_nk_bound}
	\|[\varphi^{n, k}]_{\C^{1-\var}([s, t])}\|_{L_{m}(\Omega)}  &\le C \bigl( \| f^{n, k} \|_{L_{q}([0, 1], L_{p}(\R))} +  \| f^{n, k} \|_{L_{q}([0, 1], L_{p}(\R))}^{q} \bigr)(t-s)^{\gamma}\nn\\
	&\le  C \bigl(1+ \| b \|_{L_{q}([0, 1], L_{p}(\R))}^{q} \bigr)(t-s)^{\gamma},
\end{align}
where the constant $C=C(H,p,q,m)$ is independent of $n, k, s$ and $t$, and the last inequality follows from \eqref{fnkbound}. Since $\varphi_0^{n,k}=0$ by definition, \eqref{eq:l2varphi_nk_bound} implies that for any $t\in[0,1]$
\begin{equation}\label{momentbound}
\|\varphi_t^{n, k}\|_{L_{m}(\Omega)} \le  C \bigl(1+ \| b \|_{L_{q}([0, 1], L_{p}(\R))}^{q} \bigr),
\end{equation}
for $C=C(H,p,q,m)$.

\textbf{Step 3.} Now we construct a sequence of solutions to regularized equations that converges almost surely to a limit.  Note that, by definition, for any $0 \le n < k$, 
\[
f^{n, k+1} \le f^{n,k} \le  f^{n+1,k}.
\]
Therefore, by a standard comparison principle, we obtain almost surely
\begin{equation}
\label{eq:many_inequalities}
\varphi^{n, k+1}_{t} \le \varphi^{n,k}_{t} \le  \varphi^{n+1,k}_{t}, 
\quad 0 \le n < k,\ t \in [0, 1].
\end{equation}
Let's define 
\[
  \varphi ^{n, \infty }_{t} := \inf_{k \ge n} \varphi ^{n, k}_{t}, \quad n \in \N,\ t \in [0, 1],
\]
where the functions may take an infinite value. 
By \eqref{eq:many_inequalities}, almost surely, for any $n \in \N$ and any $t \in [0, 1]$, the sequence $(\varphi ^{n, k}_{t})_{k}$ is decreasing, so almost surely,
\[
  \lim_{k \to \infty} \varphi _{t}^{n, k} = \varphi_{t}^{n, \infty},\quad t \in [0, 1],\ n \in \N.
\]
Therefore, recalling \eqref{momentbound} and using Fatou's lemma, we get 
\begin{equation}\label{onemorebound}
\|\varphi_t^{n, \infty}\|_{L_{m}(\Omega)} \le  C \bigl(1+ \| b \|_{L_{q}([0, 1], L_{p}(\R))}^{q} \bigr)
\end{equation}
and thus for any $t\in[0,1]$ we have $|\varphi_t^{n, \infty}|<\infty$ a.s.

We now can pass to the limit in \eqref{eq:many_inequalities} for fixed $n$ as $k\to\infty$, and get that  almost surely
\begin{equation}
\label{eq:many_inequalities_infty}
\varphi^{n,\infty}_{t} \le  \varphi^{n+1,\infty}_{t}, 
\quad 0 \le n,\ t \in [0, 1].
\end{equation}
Now we define 
\[
  \varphi _{t}^{\infty, \infty} := \sup_{n \ge 0} \varphi _{t}^{n, \infty}, \quad t \in [0, 1].
\]
By \eqref{eq:many_inequalities_infty}, almost surely, 
\begin{equation}\label{convergenceinf}
\lim_{n \to \infty} \varphi _{t}^{n, \infty} =  \varphi _{t}^{\infty, \infty},
\quad t \in [0, 1].
\end{equation}
Using again Fatou's lemma and \eqref{onemorebound}, we get 
\begin{equation*}
	\|\varphi_t^{\infty, \infty}\|_{L_{m}(\Omega)} \le  C \bigl(1+ \| b \|_{L_{q}([0, 1], L_{p}(\R))}^{q} \bigr)
\end{equation*}
 for $C=C(H,p,q,m)$. Therefore  for any $t\in[0,1]$ the random variable $\varphi_t^{\infty, \infty}$ is finite a.s.
The function $\varphi_{t}^{\infty, \infty}$ will be our candidate for a solution.

\textbf{Step 4.} Now let us show that a modification of   $\varphi^{\infty, \infty}$ solves \Gref{x;b;Y}. 

By choosing  $m > 1/\gamma$ in \eqref{eq:l2varphi_nk_bound} and applying Lemma \ref{lem:limit1var}, we get that there exists a continuous modification  $\wt \varphi^{n, \infty}$ of  $\varphi^{n, \infty}$ and a constant $C>0$ such that
\begin{equation}
\label{eq:l2varphi_n_bound}
  \|[\wt \varphi^{n, \infty}]_{\C^{1-\var}([s, t])}\|_{L_{m}(\Omega)} \le C(t-s)^{\gamma}, \quad n \in \N,\ s,t \in \Delta^{2}_{0, 1}.
\end{equation}

Fix $n \in \N$. As $k$ goes to infinity, the strong solutions $\varphi^{n, k}$ of \Gref{x;f^{n, k};Y} converge, for any $t \in [0, 1]$, almost surely to $\wt \varphi^{n, \infty}$. The drifts $f^{n, k}$  are bounded and converge to $f^{n, \infty}$ in $L_{q}([0, 1], L_{p}(\R))$ by Step 1.
Therefore, by Lemma~\ref{lem:stability}, $\wt \varphi^{n, \infty}$ solves \Gref{x;f^{n, \infty};Y} almost surely.

Since $\wt \varphi^{n, \infty}$ is a modification of $ \varphi^{n, \infty}$, it follows from \eqref{convergenceinf}, that  for any $t \in [0, 1]$, almost surely, $\lim_{n \to \infty }\wt \varphi ^{n, \infty }_{t} = \varphi ^{\infty , \infty }_{t}$. 
Therefore, recalling \eqref{eq:l2varphi_n_bound}, we can again apply Lemma~\ref{lem:stability}, this time as $n$ goes to infinity,  we obtain that $\varphi^{\infty, \infty}$ has a continuous modification $\wt \varphi^{\infty, \infty}$ and almost surely,
$\wt \varphi^{\infty, \infty}$ solves \Gref{x;b;Y}.

\textbf{Step 5.} It is easy to see, that if $\wt \varphi^{\infty, \infty}$ solves \Gref{x;b;Y} and $Y=W^H$, then $X:=x+\wt \varphi^{\infty, \infty}+W^H$ is a strong solution to \eqref{eq:main_eq}. Similarly, by taking $Y=L$, we see that $X:=x+\wt \varphi^{\infty, \infty}+L$ is a strong solution to \eqref{eq:main_eq_levy}.
\end{proof}


\appendix \section{Appendix} 
\label{sec:preliminary} 

\begin{proposition}[Representation of the fractional Brownian motion] \label{prop:representation}
	 Let  $H\in(0, 1)$. Let $W^H$ be a $d$-dimensional fractional Brownian motion. Then the following holds:
\begin{enumerate}[\rm{(}i\rm{)}]
	\item there exists a measurable function $\Phi\colon C ([0, 1], \R^d) \to C ([0, 1], \R^d)$ such that $B := \Phi (W^H)$ is a Brownian motion;
	\item one has 
	\begin{equation}
		\label{WB} W^H_t=\int_0^t K_H(t,s)\,dB_s =: \Psi_t(B),\quad t\in[0,T],
	\end{equation}
	where the kernel $K$ is defined in~\eqref{fbmr1}--\eqref{fbmr2};
	\item  $\Psi \circ \Phi (W^H) = W^H$;
	\item if  $H \le \frac{1}{2}$ then $\Phi : C ([0, 1], \R^d) \to C ([0, 1], \R^d)$ is continuous;
	\item if $H \ge \frac{1}{2}$, $\theta>H-\frac12$, then $\Psi : \C_0^{\theta}([0, 1], \R^d) \to C ([0, 1], \R^d) $ is continuous;
	\item \label{part1fbm} for $(s,t)\in\Delta_{0,T}^2$ one has $K_H(t, s)\ge C(H,d)(t-s)^{H-\frac12}$.
\end{enumerate}
\end{proposition}
\begin{proof}
(i) The representation (\ref{WB}) is proved in \cite[Section~5]{Nu06}. 
(ii), (iii) is \cite[Theorem~11]{Picard}. (iv) is \cite[Lemma~B.1]{Art}. 
(v) The continuity of the map  $\Psi$ is covered in \cite[Proposition B.3(iv)]{BLM23}. (vi) This is obvious and can be found, e.g., in  \cite[Proposition B.2(ii)]{BLM23}.
\end{proof}

\begin{proposition}[{John--Nirenberg's inequality, \cite[Theorem 2.3]{Le22}, \cite[Theorem~I.6.10]{Bass}}]
  \label{prop:John-Nierenberg}
  Let $0\le S\le T$. Let $\A$ be a continuous stochastic process $\Omega\times[S,T]\to\R$ adapted to a filtration $(\F_t)_{t\in[S,T]}$.
  Assume that for some constant $\Gamma>0$ one has 
  \[ 
  \E^s |\A_t-\A_s|\le \Gamma,\quad  \text{for any }(s, t) \in \Delta^2_{S, T}.
  \]
  Then for all integer $n \ge 1$ there exists a constant $C = C(n)$ such that
  \[
    \E [|\A_T-\A_S|^n] \le C \Gamma^n.
  \]
\end{proposition}

\begin{proposition}[{Taming singularities lemma  \cite[Lemma~3.4]{le2021taming}}]\label{prop:taming_singularities}

  Let $(\mathcal E,d)$ be a metric space.
  Suppose that there exist constants $\tau_1,\eta_1,\tau_2,\eta_2\ge0$, $\tau_1>\eta_1$, $\tau_2>\eta_2$, $\Gamma_1, \Gamma_2>0$, such that a continuous function $Y\colon (0,T]\to \mathcal E$ satisfies
  \begin{equation}
    \label{c:Ysing} d(Y_s,Y_t) \le \Gamma_1 s^{-\eta_1}(t-s)^{\tau_1}+\Gamma_2 s^{-\eta_2}(t-s)^{\tau_2}\quad\text{for any $0< s\le t\le T$}.
  \end{equation}
  Then for some universal constant $C=C(\tau_1,\eta_1,\tau_2,\eta_2,T)$ one has
  \begin{equation*}
    d(Y_s,Y_t)\le C \Gamma_1(t-s)^{\tau_1- \eta_1}+\Gamma_2(t-s)^{\tau_2- \eta_2}\quad\text{for any $0\le s\le t\le T$}.
  \end{equation*}
\end{proposition}


\begin{lemma}
  \label{lem:sqrbound}
  Let $x \ge 0$, $\Gamma \ge 0$ and $0 \le \gamma <1$ be such that
  \[
    x \le \Gamma (1 + x^{\gamma}).
  \]
  Then there exists $C=C(\gamma)$ such that
  \[
    x \le C(\Gamma + \Gamma^{\frac{1}{1-\gamma}}).
  \]
\end{lemma}
\begin{proof}
  If  $x \le 1$, then $x \le 2 \Gamma$.   If alternatively, $x > 1$, then $x^{\gamma} \ge 1$ and therefore $x \le 2 \Gamma x^{\gamma}$, so $x \le (2 \Gamma)^{\frac{1}{1-\gamma}}$.
  Thus, we can conclude with the constant $C = 2^{\frac{1}{1-\gamma}}$.
\end{proof}

\begin{proposition}
\label{prop:skorokhod} 
Let $(L^n)_{n\in\N}$ be a sequence of Lévy processes that are identically distributed. Assume that $L^n$ converges almost surely to $L$ in $\D ([0, 1], \R^d)$. 
Then, for any fixed $t\in [0, 1]$ the sequence $L_{t}^n$ converges to $L_{t}$ almost surely as $n\to\infty$, and $L$ is a Lévy process with the same law as any of the $L^{n}$.
\end{proposition}
\begin{proof}
Fix $t \in [0, 1]$. A Lévy process is stochastically continuous by one of the equivalent definitions (see for example \cite[Section 1.3]{App05}). Therefore, almost surely, $L$ is continuous at $t$. By \cite[Lemma~VI.2.3]{JS87}, this proves that almost surely, the map $\Lambda_t(x):=x(t)$, $x\in\D([0, 1])$, is continuous at $x=L$. Therefore, since $L^n$ converges to $L$ in $\D([0, 1])$ almost surely, we have that $L^n_t=\Lambda_t(L^n)$ converges almost surely to $\Lambda_t(L)=L_t$.
This shows that the finite dimensional laws of $L$ are the same as that of $L^{n}$ for any $n \ge 0$.
Finally, because $L$ is càdlàg, its finite dimesional laws characterize its law as a process. Therefore $L$ is a Lévy process with the same law as $L^{n}$.
\end{proof}

\begin{lemma}[The freezing lemma, {\cite[Lemma 4.1]{Baldi}}]
\label{lem:freezing}
Let $(\Omega, \F, \P)$ be a probability space and $\mathcal G$ and $\mathcal H$ independent sub-$\sigma$-algebras of $\mathcal F$. Let $X$ be a $\mathcal H$-measurable random variable taking value in the measurable space $(E, \mathcal E)$ and $\psi : E \times \Omega \to \R$ an $\mathcal E \otimes \mathcal G$-measurable function such that $\omega \mapsto \psi(X(\omega), \omega)$ is integrable. Then
\[
E[\psi(X, \cdot) | \mathcal H] = \phi(X),
\]
where $\phi(x) = \E[\psi(x, \cdot)]$. 
\end{lemma}

\begin{lemma}
\label{lem:limit1var}
Let $\psi^{n} : [0, 1] \to \R^{d}$, $n \in \N$, be a sequence of continuous stochastic processes such that for some $m \ge 1$ and $\gamma > 1/m$, there exists a constant $C>0$ such that 
\begin{equation}
\label{eq:lim1var}
  \| [\psi ^{n}]_{\C^{1-\var}([s, t])} \|_{L_{m}(\Omega)} \le C (t-s)^{\gamma}, \quad (s, t) \in \Delta^{2}_{0,1},\ n \in \N.
\end{equation}
Assume moreover that there exists a stochastic process $\psi : [0, 1] \to \R^{d}$ such that for any $t \in [0, 1]$, $\lim_{n \to \infty} \psi^{n}_{t} = \psi_{t}$ almost surely.

Then there exists a continuous modification of $\psi$. Furthermore, any continuous modification $\wt \psi$ of $\psi$ satisfies 
 \begin{equation}
\label{eq:result_lim1var}
  \| [\wt \psi]_{\C^{1-\var}([s, t])} \|_{L_{m}(\Omega)} \le C (t-s)^{\gamma}, \quad (s, t) \in \Delta^{2}_{0,1},
\end{equation}
and for any $t \in [0, 1]$, $\lim_{n \to \infty} \psi^{n}_{t} = \wt \psi_{t}$ almost surely.
\end{lemma}

\begin{proof}
By \eqref{eq:lim1var},
\begin{equation*}
	\| \psi ^{n}_{t} - \psi ^{n}_{s} \|_{L_{m}(\Omega)} \le C(t-s)^{\gamma},\quad (s, t) \in \Delta^{2}_{0, 1},\ n \in \N,
\end{equation*}
and applying Fatou's lemma we obtain
\begin{equation}
  \label{eq:lem_kolmogorov_bound}
	\| \psi _{t} - \psi_{s} \|_{L_{m}(\Omega)} \le C(t-s)^{\gamma},\quad (s, t) \in \Delta^{2}_{0, 1}.
\end{equation}
Because $\gamma > 1/m$ by hypothesis, we can apply Kolmogorov's continuity criterion and therefore there is a continuous modification of $\psi$. 

We now show \eqref{eq:result_lim1var}. Fix $(s, t) \in \Delta ^{2}_{0, 1}$ and let $\wt \psi$ be a continuous version of $\psi$. 
Using the convergence hypothesis, for any $r \in [0, 1]$, we still have $ \lim_{n \to \infty} \psi^{n}_{r} = \wt \psi _{r}$ almost surely. Because the rational numbers are countable  
\begin{equation*}
\P\bigl(\text{for any $r = s + q(t-s)$, $q \in \mathbb Q$ we have} \lim_{n \to \infty} \psi^{n}_{r} = \wt \psi _{r}\bigr)=1.
\end{equation*}
Now using that $\wt \psi$ continuous, we obtain almost surely
\begin{align*}
  [\wt \psi]_{\C^{1-\var}([s, t])} 
  &= \sup_{N \in \N} \sum_{i = 0}^{N-1} \Big|\wt \psi \Bigl(s + \frac{i+1}{N}(t-s)\Bigr) -\wt \psi\Bigr(s + \frac{i}{N}(t-s)\Bigr)\Big| 
\\&= \sup_{N\in \N}  \liminf_{n \to \infty} \sum_{i = 0}^{N-1} \Big |\psi^{n}\Bigl(s + \frac{i+1}{N}(t-s)\Bigr ) -\psi^{n}\Bigl (s + \frac{i}{N}(t-s) \Bigr )\Big | 
\\&\le \liminf_{n \to \infty}  [\psi^{n}]_{\C^{1-\var}([s, t])}.
\end{align*}
Finally, by taking the $L_{m}(\Omega)$ norm in the previous inequality and by applying Fatou's lemma, we showed that we can legitimately pass to the limit in \eqref{eq:lim1var} and we obtain
\begin{equation*}
	\| [\wt \psi]_{\C^{1-\var}([s, t])} \|_{L_{m}(\Omega)} \le C(t-s)^{\gamma}.
\end{equation*}
The constant in the above inequality is independent of $(s, t)\in \Delta ^{2}_{0,1}$, and the choice of $s$ and $t$ was arbitrary. Therefore \eqref{eq:result_lim1var} holds.
\end{proof}

\bibliographystyle{amsplain}
\bibliography{biblio.bib}




\end{document}